\documentclass[letterpaper, 11pt,  reqno]{amsart}
\usepackage[margin=1.2in,marginparwidth=1.5cm, marginparsep=0.5cm]{geometry}

\usepackage[hidelinks]{hyperref}
\usepackage{amsmath,amssymb,amscd,amsthm,amsxtra, esint, xcolor,mathtools,enumerate}

\usepackage{bbm}
\usepackage{cancel}
\usepackage{comment}

\allowdisplaybreaks[2]

\usepackage{color}

\newtheorem{theorem}{Theorem} [section]

\newtheorem{lemma}[theorem]{Lemma}
\newtheorem{proposition}[theorem]{Proposition}
\newtheorem{remark}[theorem]{Remark}

\newtheorem{corollary}[theorem]{Corollary}

\numberwithin{equation}{section}
\numberwithin{theorem}{section}

\makeatletter
\@namedef{subjclassname@2020}{\textup{2020} Mathematics Subject Classification}
\makeatother

\begin{document}
	\baselineskip = 14pt

	\title[Temporal variations and estimation for fractional SHEs]
	{Exact temporal variations and parameter estimation for space--time fractional stochastic heat equations driven by multiplicative noise}

	\author[Y. Li, Y. Hu and L. Yan]
	{Yongkang Li, Yaozhong Hu and Litan Yan}
	
	\thanks{Y. Li is supported by the Fundamental Research Funds for the Central Universities of China (CUSF-DH-T-2024066). L. Yan is supported by the National Natural Science Foundation of China (No. 11971101) and the Natural Science Foundation of Shanghai Municipality (24ZR1402900). Y. Hu is supported by   
		NSERC Discovery grant RGPIN
		2024-05941  and a centennial  fund of University of Alberta.}

	\address{Yongkang Li\\
		School of Mathematics and Statistics, Donghua University, 2999 North Renmin Rd., Songjiang, Shanghai 201620, P.R. China
	}
	\address{
		Department of Mathematical and Statistical Sciences, University of Alberta, Edmonton, AB T6G 2G1, Canada}
	\email{yongkangli@mail.dhu.edu.cn}

	\address{Yaozhong Hu\\
		Department of Mathematical and Statistical Sciences, University of Alberta, Edmonton, AB T6G 2G1, Canada}
	\email{yaozhong@ualberta.ca}

	\address{
		Litan Yan\\
		School of Mathematics and Statistics, Donghua University, 2999 North Renmin Rd., Songjiang, Shanghai 201620, P.R. China
	}
	
	\email{litan-yan@hotmail.com}

	\subjclass[2020]{60H15, 60G15, 60H05, 60H30, 60H07}

	\keywords{Temporal power variation; parameter estimation; high-frequency observations; stochastic partial differential equations; fractional stochastic heat equations.
	}
	
	\begin{abstract}
		We establish exact limits for the normalized temporal power variations
		of every fixed real order \(\gamma\geq2\) for the mild solution to a
		one-dimensional space--time fractional stochastic heat equation driven
		by multiplicative space--time white noise.  The results include the exact normalized
		quadratic variation and the critical power variation of order
		\(2\alpha/(\alpha-\beta)\) as special cases. We also obtain the
		corresponding spatially averaged temporal variations. As applications,
		we construct consistent estimators for the drift parameter and for the
		ratio \(\beta/\alpha\).
	\end{abstract}

	%
	\maketitle
	%
	
	%

	\baselineskip = 14pt
	
\section{Introduction}
\label{sec:introduction}

We focus on the following one-dimensional space--time fractional
stochastic heat equation:
\begin{equation}
	\label{eq:main-equation}
	\left\{
	\begin{aligned}
		\partial_t^\beta u_\nu(t,x)
		&=
		-\nu(-\Delta)^{\alpha/2}u_\nu(t,x)
		+
		I_t^{1-\beta}
		\left[
		\sigma(u_\nu(t,x))\dot W(t,x)
		\right],
		\qquad t\geq0,\ x\in\mathbb R,\\
		u_\nu(0,x)&=0,
		\qquad x\in\mathbb R,
	\end{aligned}
	\right.
\end{equation}
where \(\dot W\) denotes space--time white noise, \(\nu>0\),
\(\beta\in(0,1)\), and \(\alpha\in(1,2]\). Here
\(\sigma:\mathbb R\to\mathbb R\) is assumed to be globally Lipschitz
continuous.

In \eqref{eq:main-equation}, \(\partial_t^\beta\) denotes the Caputo
fractional derivative, whose formulation originates in Caputo's work
on linear models of dissipation
\cite{caputo1967linear}. The operator
\(I_t^{1-\beta}\) is the Riemann--Liouville fractional integral, while
\(-(-\Delta)^{\alpha/2}\) is the generator of a symmetric
\(\alpha\)-stable process. Thus, \(\beta\) describes temporal memory,
whereas \(\alpha\) determines the degree of nonlocal spatial diffusion.
Such fractional evolution equations arise naturally in models of
anomalous diffusion; see, for example, \cite{metzler2000random}.

The placement of \(I_t^{1-\beta}\) in front of the stochastic forcing
is dictated by the time-fractional Duhamel principle developed in
\cite{umarov2006fractional,umarov2007generalization,
	umarov2012duhamel}. In particular, this choice allows the mild
formulation to be expressed directly in terms of the fundamental
solution of the corresponding deterministic space--time fractional
heat equation. Accordingly, the mild
solution of \eqref{eq:main-equation} is given by
\begin{equation}
	\label{eq:mild-introduction}
	u_\nu(t,x)
	=
	\int_0^t\int_{\mathbb R}
	G_{\alpha,\beta}^{(\nu)}(t-r,x-z)
	\sigma(u_\nu(r,z))
	W(\mathrm dr,\mathrm dz),
\end{equation}
where the Fourier transform of the Green kernel is
\begin{equation}
	\label{eq:kernel-fourier-introduction}
	\mathcal F
	G_{\alpha,\beta}^{(\nu)}(t,\cdot)(\xi)
	=
	E_\beta\left(-\nu t^\beta|\xi|^\alpha\right).
\end{equation}
Existence, uniqueness, moment bounds, and sample-path regularity for
equations of this type were investigated in
\cite{mijena2015space,foondun2017asymptotic}. In particular, the
Mittag--Leffler structure in
\eqref{eq:kernel-fourier-introduction}, rather than a semigroup
property, governs the temporal behavior of the solution.

The present paper studies the high-frequency temporal behavior of
\(u_\nu\). Fix \(0<T_1<T_2<\infty\) and \(x\in\mathbb R\), and consider
the equidistant observation times
\begin{equation}
	\label{eq:partition-introduction}
	t_i
	:=
	T_1+i\delta,
	\qquad
	\delta
	:=
	\frac{T_2-T_1}{n},
	\qquad
	i=0,1,\ldots,n.
\end{equation}
The restriction \(T_1>0\) keeps the observation interval away from
the singular initial boundary and is needed for the uniform temporal
increment estimates used below. For \(\gamma\geq2\), our principal
object is the normalized temporal power variation
\begin{equation}
	\label{eq:gamma-variation-introduction}
	n^{
		\frac{\gamma}{2}
		\left(1-\frac{\beta}{\alpha}\right)-1
	}
	\sum_{i=1}^n
	\left|
	u_\nu(t_i,x)-u_\nu(t_{i-1},x)
	\right|^\gamma.
\end{equation}
Besides determining its exact limit, we investigate its spatially
averaged counterpart and use the resulting asymptotic formulas to
construct parameter estimators.

Power variations have long provided an effective means of describing
the fine sample-path behavior of stochastic heat equations and of
constructing estimators from discrete observations. For the classical
stochastic heat equation, Posp\'i\v{s}il and
Tribe~\cite{pospisil2007parameter} obtained exact spatial quadratic
and exact temporal quartic variations and applied them to parameter
estimation, while Swanson~\cite{Swanson} studied several variation
functionals of the linear stochastic heat equation. More broadly,
Chong~\cite{chong2020high} established limit theorems for multipower
variations of parabolic SPDEs under high-frequency temporal
observations and used them for nonparametric volatility estimation.
Bibinger and Trabs~\cite{Bibinger-central,Bibinger-volatility}
obtained central limit theorems for power variations and developed
volatility estimation procedures based on discrete observations.
Parameter estimation from temporal and spatial increments was further
investigated in \cite{cialenco2020note,Hildebrandt2021}. For
discretely observed semilinear SPDEs, Cialenco et al. ~\cite{cialenco2024power} developed a
\(\Delta\)-power-variation approach based on higher-order finite
differences and applied it to drift and volatility estimation.
General surveys of statistical inference for SPDEs can be found in
\cite{cialenco2018,Lototsky2017}, and the monograph
\cite{tudor2022stochastic} provides a systematic account of variation
methods for stochastic heat equations.

Related variation problems have also been considered for equations
with fractional spatial operators or fractional Gaussian forcing.
Mahdi Khalil and Tudor~\cite{Mahdi-khalil-frac} constructed
power-variation estimators for the drift parameter of a stochastic
heat equation with a fractional Laplacian and additive Gaussian noise.
Torres, Tudor, and Viens~\cite{torres2014quadratic} studied quadratic
variations for a linear heat equation driven by fractional-colored
noise. In the multiplicative-noise setting, Gamain and
Tudor~\cite{gamain2022023exact} and Li et
al.~\cite{li2026exact} established spatial quadratic variation
results and corresponding drift estimators for nonlinear fractional
stochastic heat equations. More recently, Olivera and
Tudor~\cite{Oliver-frac} and Li et al.~\cite{Li2025} studied temporal
quadratic variations for nonlinear stochastic heat
equations involving a fractional Laplacian operator. Variation methods have
also been developed for stochastic heat equations with piecewise
constant coefficients: Li et
al.~\cite{li2024temporal,li2025spatial} established temporal quartic
and spatial quadratic variation results in the multiplicative-noise
setting, while Zili and Zougar~\cite{zili2019exact} considered the
additive-noise case. The fractional-Laplacian models cited above
retain a first-order time derivative. In contrast, equation
\eqref{eq:main-equation} contains both a Caputo time derivative and a
fractional spatial operator, so its temporal scaling depends jointly
on \(\alpha\) and \(\beta\).

The closest precursor to the present work is \cite{LI}, where temporal
variations were studied for the additive version of
\eqref{eq:main-equation}. More precisely, let \(u_0\) denote the mild
solution corresponding to \(\nu=1\) and \(\sigma\equiv1\), and define
the temporal increment by
\[
\Delta(t,\delta)
:=
u_0(t+\delta,x)-u_0(t,x).
\]
A fundamental estimate obtained in \cite{LI} is
\begin{equation}
	\label{eq:Delta2}
	\mathbb E
	\left|
	\Delta(t,\delta)
	\right|^2
	=
	C_{\alpha,\beta}
	\delta^{1-\beta/\alpha}
	+
	O(\delta^2),
	\qquad
	\delta\downarrow0,
\end{equation}
uniformly for \(t\) in compact intervals bounded away from zero.
Consequently,
\(\delta^{(1-\beta/\alpha)/2}\) is the natural
\(L^2(\Omega)\)-scale of a temporal increment.

The passage from additive to multiplicative noise introduces a
substantial difficulty. When \(\sigma\) is nonconstant, a temporal
increment of \(u_\nu\) is no longer Gaussian, and its random
coefficient depends on the same noise that generates the increment.
Thus, the Gaussian moment calculations available in the additive
case cannot be applied directly. To overcome this dependence, we
introduce a frozen-time approximation. The coefficient is frozen at
a time preceding the increment by a suitably chosen power of the mesh
size, while the part of the driving noise before that time is replaced
by an independent copy. The resulting modified increment has the same
distribution as an additive Gaussian increment and is independent of
the frozen coefficient. Estimates for the fractional Green kernel,
together with the temporal and spatial regularity of the solution,
then yield a quantitative \(L^2(\Omega)\)-bound for the approximation
error.

Our first main result shows that, for every real \(\gamma\geq2\),
\begin{align}
	&n^{
		\frac{\gamma}{2}
		\left(1-\frac{\beta}{\alpha}\right)-1
	}
	\sum_{i=1}^n
	\left|
	u_\nu(t_i,x)-u_\nu(t_{i-1},x)
	\right|^\gamma
	\notag\\
	&\quad\xrightarrow{L^1(\Omega)}
	\mu_\gamma
	C_{\alpha,\beta}^{\gamma/2}
	\nu^{-\gamma/(2\alpha)}
	(T_2-T_1)^{
		\frac{\gamma}{2}
		\left(1-\frac{\beta}{\alpha}\right)-1
	}
	\int_{T_1}^{T_2}
	\left|
	\sigma(u_\nu(r,x))
	\right|^\gamma
	\,\mathrm dr,
	\label{eq:main-result-introduction}
\end{align}
where
\begin{equation}
	\label{eq:mu-gamma-introduction}
	\mu_\gamma
	:=
	\mathbb E|Z|^\gamma
	=
	\frac{
		2^{\gamma/2}
		\Gamma\left(\frac{\gamma+1}{2}\right)
	}{
		\sqrt{\pi}
	},
	\qquad
	Z\sim N(0,1).
\end{equation}
Here \(C_{\alpha,\beta}\) is the leading constant in
\eqref{eq:Delta2}. We also obtain an explicit polynomial convergence
rate that is uniform with respect to the spatial location \(x\). This
uniformity allows us to average \eqref{eq:gamma-variation-introduction}
over an increasing number of spatial observation points. In
particular, if \(x_j=j/m\), then the corresponding spatially averaged
variation converges in \(L^1(\Omega)\) to the same deterministic
constant multiplied by
\[
\int_0^1
\int_{T_1}^{T_2}
\left|
\sigma(u_\nu(t,x))
\right|^\gamma
\,\mathrm dt\,\mathrm dx,
\]
as \(n,m\to\infty\), without requiring a prescribed relation between
the two sampling frequencies.

The normalized quadratic variation is recovered from
\eqref{eq:main-result-introduction} by taking \(\gamma=2\). On the
other hand, when
\(\gamma=p_{\alpha,\beta}=2\alpha/(\alpha-\beta)\), both the
normalizing power of \(n\) and the power of \(T_2-T_1\) disappear,
yielding an unnormalized critical power variation. Thus, the quadratic
and critical-order variations are two distinguished members of the
general family \eqref{eq:main-result-introduction}, rather than
separate phenomena.

Finally, we apply the variation limits to statistical inference.
Assuming that \(\alpha\), \(\beta\), and \(\sigma\) are known, we
construct consistent estimators of the drift, or diffusivity,
parameter \(\nu\), both from temporal observations at a fixed spatial
point and from observations collected over an increasing spatial
grid. We also show that the logarithmic growth of the unnormalized
temporal power variation consistently identifies the ratio
\(\beta/\alpha\). Consequently, \(\beta\) can be estimated when
\(\alpha\) is known, and conversely \(\alpha\) can be estimated when
\(\beta\) is known. If both fractional orders are unknown, temporal
high-frequency observations alone identify their ratio but do not
separate the two parameters.

The remainder of the paper is organized as follows. Section
\ref{se:2} introduces the Walsh--Dalang stochastic integral, explains
the mild formulation of \eqref{eq:main-equation}, and collects the
regularity and fractional-kernel estimates required later. In Section
\ref{se:3}, we establish the frozen-time approximation, prove the
normalized temporal power-variation theorem, and derive its spatially
averaged version. Section \ref{sec:applications} develops the
consistent estimators for \(\nu\) and for the ratio
\(\beta/\alpha\).

	\section{Preliminaries}
	\label{se:2}
	
	In this section, we collect several notions and technical facts concerning
	the space--time fractional stochastic heat equation that will be used
	throughout the paper.
	
	Throughout this section, \(\nu>0\) is fixed. To simplify notation, we
	suppress the dependence on \(\nu\) and write
	\[
	u(t,x):=u_\nu(t,x),
	\qquad
	G_t(x):=G_{\alpha,\beta}^{(\nu)}(t,x).
	\]
	
	\subsection{Space--time white noise and the Walsh--Dalang stochastic integral}
	
	Let
	\[
	W=\{W(t,A),\ t\geq0,\ A\in\mathcal B_b(\mathbb R)\}
	\]
	be a centered Gaussian random field on \([0,\infty)\times\mathbb R\),
	defined on a complete probability space
	\((\Omega,\mathcal F,\mathbb P)\), with covariance
	\[
	\mathbb E[W(t,A)W(s,B)]
	=
	(t\wedge s)\lambda(A\cap B),
	\qquad A,B\in\mathcal B_b(\mathbb R),
	\]
	where \(\lambda\) denotes the Lebesgue measure on \(\mathbb R\), and
	\(\mathcal B_b(\mathbb R)\) is the collection of bounded Borel subsets of
	\(\mathbb R\). For \(t\geq0\), we denote by \(\mathcal F_t\) the
	\(\sigma\)-field generated by
	\[
	\{W(s,A):0\leq s\leq t,\ A\in\mathcal B_b(\mathbb R)\},
	\]
	augmented in the usual way.
	
	Equivalently, the formal derivative of \(W\), denoted by \(\dot W\), is a
	space--time white noise with covariance
	\[
	\mathbb E[\dot W(t,x)\dot W(s,y)]
	=
	\delta_0(t-s)\delta_0(x-y).
	\]
	In the sense of Walsh \cite{Walsh1986} and Dalang \cite{dalang}, the
	stochastic integral
	\[
	\int_0^t\int_{\mathbb R} X(s,y)\,W(ds,dy)
	\]
	is well defined whenever the random field
	\[
	\{X(t,x):t\geq0,\ x\in\mathbb R\}
	\]
	is jointly measurable, adapted to the filtration
	\(\{\mathcal F_t\}_{t\geq0}\), and satisfies
	\[
	\mathbb E\int_0^t\int_{\mathbb R}
	|X(s,y)|^2\,dy\,ds<\infty.
	\]
	Moreover, the integral satisfies the It\^o isometry
	\[
	\mathbb E\left[
	\left(
	\int_0^t\int_{\mathbb R}
	X(s,y)\,W(ds,dy)
	\right)^2
	\right]
	=
	\mathbb E\int_0^t\int_{\mathbb R}
	|X(s,y)|^2\,dy\,ds .
	\]
	
	\subsection{Mild solution}
	
	For \(0<\beta<1\), the Caputo fractional derivative, introduced by
	Caputo in the study of linear dissipation models
	\cite{caputo1967linear}, is defined by
	\begin{equation}
		\label{eq:Caputo-derivative}
		\partial_t^\beta f(t)
		:=
		\frac{1}{\Gamma(1-\beta)}
		\int_0^t
		(t-r)^{-\beta}f'(r)\,\mathrm dr
		=
		I_t^{1-\beta}f'(t),
	\end{equation}
	whenever the right-hand side is well defined. Its Laplace transform
	satisfies
	\begin{equation}
		\label{eq:Laplace-Caputo}
		\mathcal L_t
		\left[
		\partial_t^\beta f
		\right](\lambda)
		=
		\lambda^\beta
		\mathcal L_t[f](\lambda)
		-
		\lambda^{\beta-1}f(0),
		\qquad \lambda>0.
	\end{equation}
	
	For \(\vartheta>0\), the Riemann--Liouville fractional integral is
	defined by
	\begin{equation}
		\label{eq:RL-integral}
		I_t^\vartheta f(t)
		:=
		\frac{1}{\Gamma(\vartheta)}
		\int_0^t
		(t-r)^{\vartheta-1}f(r)\,\mathrm dr.
	\end{equation}
	For \(0<\vartheta<1\), the corresponding Riemann--Liouville
	fractional derivative is
	\begin{equation}
		\label{eq:RL-derivative}
		D_t^\vartheta f(t)
		:=
		\frac{\mathrm d}{\mathrm dt}
		I_t^{1-\vartheta}f(t).
	\end{equation}
	Under the usual regularity and integrability assumptions,
	\begin{equation}
		\label{eq:RL-left-inverse}
		D_t^\vartheta I_t^\vartheta f(t)
		=
		f(t).
	\end{equation}
	
	We next explain the occurrence of \(I_t^{1-\beta}\) in
	\eqref{eq:main-equation}. Consider first the deterministic
	inhomogeneous problem
	\begin{equation}
		\label{eq:deterministic-inhomogeneous}
		\left\{
		\begin{aligned}
			\partial_t^\beta v(t,x)
			&=
			-\nu(-\Delta)^{\alpha/2}v(t,x)
			+
			f(t,x),\\
			v(0,x)&=v_0(x).
		\end{aligned}
		\right.
	\end{equation}
	The time-fractional Duhamel principle developed in
	\cite{umarov2006fractional,umarov2007generalization,
		umarov2012duhamel} gives
	\begin{align}
		v(t,x)
		={}&
		\int_{\mathbb R}
		G_t(x-y)v_0(y)\,\mathrm dy
		\notag\\
		&+
		\int_0^t
		\int_{\mathbb R}
		G_{t-r}(x-y)
		D_r^{1-\beta}f(r,y)
		\,\mathrm dy\,\mathrm dr.
		\label{eq:fractional-duhamel}
	\end{align}
	
	Suppose that the forcing term has the form
	\[
	f(t,x)
	=
	I_t^{1-\beta}g(t,x).
	\]
	By \eqref{eq:RL-left-inverse},
	\[
	D_t^{1-\beta}f(t,x)
	=
	D_t^{1-\beta}
	I_t^{1-\beta}g(t,x)
	=
	g(t,x).
	\]
	Consequently, \eqref{eq:fractional-duhamel} reduces to
	\begin{align}
		v(t,x)
		={}&
		\int_{\mathbb R}
		G_t(x-y)v_0(y)\,\mathrm dy
		\notag\\
		&+
		\int_0^t
		\int_{\mathbb R}
		G_{t-r}(x-y)g(r,y)
		\,\mathrm dy\,\mathrm dr.
		\label{eq:duhamel-cancelled}
	\end{align}
	
	The preceding calculation is first carried out for a sufficiently
	regular deterministic forcing term. Formally taking
	\[
	g(t,x)
	=
	\sigma(u(t,x))\dot W(t,x)
	\]
	explains why the stochastic forcing in \eqref{eq:main-equation} is
	written as
	\[
	I_t^{1-\beta}
	\bigl[
	\sigma(u(t,x))\dot W(t,x)
	\bigr].
	\]
	At the level of space--time white noise, the preceding calculation is
	understood as motivation rather than as a pathwise application of the
	fractional operators. Following
	\cite{mijena2015space,foondun2017asymptotic}, the corresponding
	stochastic convolution provides the rigorous definition of the mild
	solution:
	\begin{equation}
		\label{eq:mild}
		u(t,x)
		=
		\int_0^t
		\int_{\mathbb R}
		G_{t-r}(x-y)
		\sigma(u(r,y))
		W(\mathrm dr,\mathrm dy).
	\end{equation}
	Here \(G_t\) is the fundamental solution of
	\[
	\partial_t^\beta G_t(x)
	=
	-\nu(-\Delta)^{\alpha/2}G_t(x),
	\]
	and its spatial Fourier transform is
	\begin{equation}
		\label{eq:fourier-G-pre}
		\widehat G_t(\xi)
		=
		E_\beta(-\nu t^\beta|\xi|^\alpha),
		\qquad t>0,\quad \xi\in\mathbb R.
	\end{equation}
	
	By the \(L^2(\mathbb R)\)-estimate for
	\(G_t\) in \cite{mijena2015space},
	\begin{align*}
		\int_0^t
		\int_{\mathbb R}
		G_{t-r}^2(x-y)
		\,\mathrm dy\,\mathrm dr
		&=
		\int_0^t
		\|G_{t-r}\|_{L^2(\mathbb R)}^2
		\,\mathrm dr
		\\
		&=
		C_{\alpha,\beta,\nu}^{*}
		\int_0^t
		(t-r)^{-\beta/\alpha}
		\,\mathrm dr
		\\
		&=
		\frac{
			C_{\alpha,\beta,\nu}^{*}
		}{
			1-\beta/\alpha
		}
		t^{1-\beta/\alpha}
		<\infty,
	\end{align*}
	since \(0<\beta/\alpha<1\). Together with the global Lipschitz
	continuity of \(\sigma\), this estimate permits the standard Picard
	iteration construction of a unique random-field solution; see
	\cite{mijena2015space,foondun2017asymptotic}.
	
	Moreover, since the initial condition is identically zero, the moment
	estimate in \cite[Theorem~2]{mijena2015space} implies that, for every
	\(p\geq2\) and \(T>0\), there exists a constant
	\(C_{T,p}<\infty\) such that
	\begin{equation}
		\label{eq:uniform-moment-bound}
		\sup_{0\leq t\leq T}
		\sup_{x\in\mathbb R}
		\mathbb E|u(t,x)|^p
		\leq
		C_{T,p}.
	\end{equation}
	In particular, by the linear growth of \(\sigma\),
	\begin{align*}
		&\mathbb E
		\int_0^t
		\int_{\mathbb R}
		G_{t-r}^2(x-y)
		|\sigma(u(r,y))|^2
		\,\mathrm dy\,\mathrm dr
		\\
		&\quad\leq
		C_T
		\int_0^t
		\int_{\mathbb R}
		G_{t-r}^2(x-y)
		\,\mathrm dy\,\mathrm dr
		<\infty.
	\end{align*}
	Hence the stochastic convolution in \eqref{eq:mild} is
	well defined in the Walsh--Dalang sense.
	
	The solution also satisfies the following increment estimate. Set
	\begin{equation}
		\label{eq:kappa-star}
		\kappa_*
		:=
		\min\left\{
		2,\,
		2\alpha-1,\,
		\frac{\alpha-\beta}{\beta}
		\right\}.
	\end{equation}
	It follows from the argument used in the proof of the spatial increment
	estimate in \cite{mijena2015space} that, for every \(p\geq2\),
	\(T>0\), and \(0<q<\kappa_*\), there exists a constant
	\(C_{T,p,q}<\infty\) such that
	\begin{equation}
		\label{eq:increment}
		\mathbb E|u(t,x)-u(s,y)|^p
		\leq
		C_{T,p,q}
		\left(
		|t-s|^{\frac{p}{2}
			\left(1-\frac{\beta}{\alpha}\right)}
		+
		|x-y|^{\frac{pq}{2}}
		\right)
	\end{equation}
	for all \(s,t\in[0,T]\) and \(x,y\in\mathbb R\).
	
	Consequently, \(u\) admits a modification whose sample paths are
	locally H\"older continuous in time with every exponent
	\[
	\theta_t
	<
	\frac12\left(1-\frac{\beta}{\alpha}\right),
	\]
	and locally H\"older continuous in space with every exponent
	\[
	\theta_x
	<
	\frac{\kappa_*}{2}.
	\]
	
	In particular, the time regularity exponent is governed by the ratio
	\(\beta/\alpha\). This is consistent with the natural scaling relation
	\[
	x\sim t^{\beta/\alpha}
	\]
	for the space--time fractional heat kernel.

	\subsection{The space--time fractional Green kernel}
	\label{subsec:fractional-green-kernel}
	
	Let \(G_t(x)\) denote the fundamental solution of the deterministic
	space--time fractional heat equation
	\[
	\partial_t^\beta G_t(x)
	=
	-\nu(-\Delta)^{\alpha/2}G_t(x),
	\qquad t>0,\ x\in\mathbb R,
	\]
	where \(\nu>0\), \(\beta\in(0,1)\), and \(1<\alpha\leq2\). The Fourier
	transform of \(G_t\) with respect to the space variable is given by
	\begin{equation}
		\label{eq:fourier-trans-fractional}
		\mathcal F G_t(\cdot)(\xi)
		=
		\widehat G_t(\xi)
		=
		E_\beta(-\nu t^\beta|\xi|^\alpha),
		\qquad t>0,\ \xi\in\mathbb R,
	\end{equation}
	where
	\[
	E_\beta(z)
	=
	\sum_{k=0}^\infty
	\frac{z^k}{\Gamma(1+\beta k)}
	\]
	is the Mittag--Leffler function. We refer to
	\cite{haubold2011mittag,gorenflo2020mittag} for its basic properties
	and asymptotic estimates. Since our analysis relies heavily on
	Fourier techniques, we recall the Parseval--Plancherel identity: for
	\(f,g\in L^2(\mathbb R)\),
	\begin{equation}
		\label{eq:PP-iden-fractional}
		\langle f,g\rangle_{L^2(\mathbb R)}
		=
		\frac{1}{2\pi}
		\int_{\mathbb R}
		(\mathcal F f)(\xi)
		\overline{(\mathcal F g)(\xi)}
		\,d\xi .
	\end{equation}
	
	We now collect several properties of the kernel \(G_t\) which will be used throughout the sequel. For more details, we refer to
	\cite{mijena2015space,foondun2017asymptotic,LI}.
	
	\begin{itemize}
		\item[(i)]
		Let \(p_\alpha^{(\nu)}(s,x)\) be the transition density of the symmetric
		\(\alpha\)-stable process with Fourier transform
		\[
		\mathcal F p_\alpha^{(\nu)}(s,\cdot)(\xi)
		=
		e^{-\nu s|\xi|^\alpha}.
		\]
		Let \(E_t\) be the inverse of a \(\beta\)-stable subordinator, with density
		\(f_{E_t}(s)\). Then \(G_t\) admits the subordination representation
		\begin{equation}
			\label{eq:subordination-G}
			G_t(x)
			=
			\int_0^\infty
			p_\alpha^{(\nu)}(s,x)f_{E_t}(s)\,ds .
		\end{equation}
		Consequently, \(G_t(x)\geq0\), \(G_t\) is symmetric in the space variable,
		and
		\[
		\int_{\mathbb R}G_t(x)\,dx=1.
		\]
		
		\item[(ii)]
		The following two-sided estimate for the Mittag--Leffler function will
		be used repeatedly; see
		\cite{mijena2015space,gorenflo2020mittag}. For \(z>0\),
		\begin{equation}
			\label{eq:ML-estimate}
			\frac{1}{1+\Gamma(1-\beta)z}
			\leq
			E_\beta(-z)
			\leq
			\frac{1}{1+\Gamma(1+\beta)^{-1}z}.
		\end{equation}
		
		\item[(iii)]
		Unlike the classical heat kernel, \(G_t\) does not satisfy the semigroup
		property in general. Indeed,
		\[
		\widehat G_{t+s}(\xi)
		=
		E_\beta(-\nu(t+s)^\beta|\xi|^\alpha),
		\]
		whereas
		\[
		\widehat G_t(\xi)\widehat G_s(\xi)
		=
		E_\beta(-\nu t^\beta|\xi|^\alpha)
		E_\beta(-\nu s^\beta|\xi|^\alpha).
		\]
		These two quantities are not equal unless \(\beta=1\). Therefore, for
		\(\beta\in(0,1)\), one cannot use the classical semigroup identity
		\[
		G_{t+s}=G_t*G_s.
		\]
		
		\item[(iv)]
		The kernel \(G_t\) has the following scaling property. Let \(G_1^{(1)}\)
		denote the kernel corresponding to \(t=1\) and \(\nu=1\). Then
		\begin{equation}
			\label{eq:G-scaling-fractional}
			G_t^{(\nu)}(x)
			=
			(\nu t^\beta)^{-1/\alpha}
			G_1^{(1)}
			\left(
			(\nu t^\beta)^{-1/\alpha}x
			\right).
		\end{equation}
		Equivalently, its natural space-time scaling is
		\[
		x\sim t^{\beta/\alpha}.
		\]
		
		\item[(v)]
Since \(d=1\) and \(\alpha>1\), the following pointwise upper
estimate holds:
\begin{equation}
	\label{eq:G-pointwise-fractional}
	G_t(x)
	\leq
	C_{\alpha,\beta,\nu}
	\left(
	t^{-\beta/\alpha}
	\wedge
	\frac{t^\beta}{|x|^{1+\alpha}}
	\right),
	\qquad t>0,\quad x\in\mathbb R.
\end{equation}
In particular,
\[
G_t(x)
\leq
C_{\alpha,\beta,\nu}t^{-\beta/\alpha}.
\]
		
		\item[(vi)]
		For every \(t>0\),
		\begin{equation}
			\label{eq:L2-G-fractional}
			\int_{\mathbb R}G_t^2(x)\,dx
			=
			C_{\alpha,\beta,\nu}^{*}t^{-\beta/\alpha},
		\end{equation}
		where
		\begin{equation}
			\label{eq:C-star-fractional}
			C_{\alpha,\beta,\nu}^{*}
			=
			\frac{\nu^{-1/\alpha}}{\pi\alpha}
			\int_0^\infty
			z^{1/\alpha-1}
			\bigl(E_\beta(-z)\bigr)^2\,dz .
		\end{equation}
		
		\item[(vii)]
		For \(r>0\), regard \(G_r(\cdot)\) as an
		\(L^2(\mathbb R)\)-valued function. The derivative formula for the
		Mittag--Leffler function gives
		\[
		\frac{\partial}{\partial r}\widehat G_r(\xi)
		=
		-\nu|\xi|^\alpha r^{\beta-1}
		E_{\beta,\beta}(-\nu r^\beta|\xi|^\alpha);
		\]
		see, for example,
		\cite{haubold2011mittag,gorenflo2020mittag}.
		Moreover,
		\[
		|E_{\beta,\beta}(-z)|
		\leq
		\frac{C}{(1+z)^2},
		\qquad z\geq0.
		\]
		Consequently, by the Parseval--Plancherel identity,
		\begin{equation}
			\label{eq:time-derivative-G-L2}
			\left\|
			\frac{\partial}{\partial r}G_r(\cdot)
			\right\|_{L^2(\mathbb R)}^2
			\leq
			C_{\alpha,\beta,\nu}
			r^{-2-\beta/\alpha}.
		\end{equation}
		The details of this estimate were established in
		\cite[Lemma~1]{LI}.
	\end{itemize}
	The following temporal difference estimate for the fractional heat
	kernel was established in \cite{LI} and will be used repeatedly in the
	sequel.
	
	\begin{lemma}
		\label{lem:kernel-temporal-difference}
		There exists a constant \(C_{\alpha,\beta,\nu}<\infty\) such that, for
		every \(t>0\) and \(\delta>0\),
		\begin{equation}
			\label{eq:kernel-temporal-difference}
			\int_{\mathbb R}
			\left|
			G_{t+\delta}(y)
			-
			G_{t}(y)
			\right|^2
			\,\mathrm dy
			\leq
			C_{\alpha,\beta,\nu}\,
			\delta^2\,t^{-2-\beta/\alpha}.
		\end{equation}
	\end{lemma}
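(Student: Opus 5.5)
The natural approach is through the Fourier side, using the Parseval--Plancherel identity \eqref{eq:PP-iden-fractional}. The idea is to write the difference as an integral of the time derivative and then apply the derivative bound \eqref{eq:time-derivative-G-L2}. Concretely, for fixed \(\xi\) the map \(r\mapsto \widehat G_r(\xi)=E_\beta(-\nu r^\beta|\xi|^\alpha)\) is smooth on \((0,\infty)\). Hence
\[
\widehat G_{t+\delta}(\xi)-\widehat G_t(\xi)
=
\int_t^{t+\delta}
\frac{\partial}{\partial r}\widehat G_r(\xi)\,\mathrm dr
=
-\nu|\xi|^\alpha\int_t^{t+\delta} r^{\beta-1}E_{\beta,\beta}(-\nu r^\beta|\xi|^\alpha)\,\mathrm dr .
\]
This is the derivative formula recalled in item (vii).

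Rather than using the derivative bound directly, I would apply Minkowski's integral inequality in \(L^2(\mathbb R,\mathrm d\xi)\):
\[
\bigl\|G_{t+\delta}-G_t\bigr\|_{L^2(\mathbb R)}
\le
\int_t^{t+\delta}\Bigl\|\frac{\partial}{\partial r}G_r\Bigr\|_{L^2(\mathbb R)}\mathrm dr
\le
C\int_t^{t+\delta} r^{-1-\beta/(2\alpha)}\,\mathrm dr .
\]
The second inequality is \eqref{eq:time-derivative-G-L2}, which itself comes from the substitution \(z=\nu r^\beta|\xi|^\alpha\) together with \(|E_{\beta,\beta}(-z)|\le C(1+z)^{-2}\). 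That substitution gives
\[
\int |\xi|^{2\alpha}r^{2\beta-2}(1+\nu r^\beta|\xi|^\alpha)^{-4}\,\mathrm d\xi
\asymp r^{2\beta-2}\,r^{-\beta(2\alpha+1)/\alpha}
=r^{-2-\beta/\alpha},
\]
and the \(\xi\)-integral converges at infinity because \(2\alpha-4\alpha<-1\).

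There is one subtlety. The integral \(\int_t^{t+\delta}r^{-1-\beta/(2\alpha)}\,\mathrm dr\) is bounded by \(\delta\,t^{-1-\beta/(2\alpha)}\), since the integrand is decreasing in \(r\). This holds uniformly for all \(\delta>0\), not only \(\delta\le t\). Squaring then yields \(C\delta^2t^{-2-\beta/\alpha}\), which is exactly \eqref{eq:kernel-temporal-difference}. The passage from the Fourier side back to physical space is just \eqref{eq:PP-iden-fractional}, using \(G_t\in L^2\) by \eqref{eq:L2-G-fractional}.

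The main point to justify carefully is differentiation under the integral. One has to interpret \(r\mapsto G_r\) as an \(L^2\)-valued \(C^1\) map on \([t,t+\delta]\), or equivalently work pointwise in \(\xi\) and apply Minkowski's inequality to \(\bigl|\int_t^{t+\delta}\partial_r\widehat G_r(\xi)\,\mathrm dr\bigr|\). The pointwise-in-\(\xi\) version needs only the fundamental theorem of calculus for the smooth scalar function \(r\mapsto E_\beta(-\nu r^\beta|\xi|^\alpha)\), together with measurability. For that reason I would present the argument in that form, which avoids any Bochner-integral technicalities.
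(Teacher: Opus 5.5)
Your argument is correct and follows essentially the paper's route. The paper only cites Lemma~1 of \cite{LI}, but item (vii) records your ingredients: the derivative formula for $\widehat G_r(\xi)$, the bound $|E_{\beta,\beta}(-z)|\le C(1+z)^{-2}$, and Plancherel giving $\|\partial_r G_r\|_{L^2}^2\le C r^{-2-\beta/\alpha}$, after which integrating the derivative over $[t,t+\delta]$ gives the lemma; your pointwise-in-$\xi$ fundamental theorem of calculus with Minkowski's integral inequality is a clean way to make that ``$L^2$-valued derivative'' step rigorous.
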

	\begin{proof}
		See \cite[Lemma~1]{LI}.
	\end{proof}
	Throughout this paper, \(C\) denotes a finite positive constant whose
	value may change from line to line. Dependence relevant to an argument
	is indicated by subscripts, while dependence on fixed model parameters
	is usually suppressed. Unless otherwise stated, these constants are
	independent of the spatial and temporal mesh sizes and of the grid
	indices.

\section{Temporal variations}\label{se:3}
Throughout this section, we set \(\nu=1\). We first establish a local
Gaussian approximation for the nonlinear temporal increments. We then
derive the normalized temporal \(\gamma\)-variation and its quadratic
and critical-order special cases. Finally, we investigate the
corresponding spatially averaged temporal variations.

\subsection{Frozen--time approximation}
\label{subsec:local-gaussian-approximation}

The mild solution of \eqref{eq:main-equation} with \(\nu=1\) is given
by
\begin{equation}
	\label{eq:mild-solution}
	u(t,x)
	=
	\int_0^t
	\int_{\mathbb R}
	G_{\alpha,\beta}(t-r,x-z)
	\sigma(u(r,z))
	W(\mathrm dr,\mathrm dz).
\end{equation}

Fix \(x\in\mathbb R\). For \(t>0\) and \(\delta>0\), define the
nonlinear temporal increment by
\begin{equation}
	\label{eq:nonlinear-temporal-increment}
	\bar{\Delta}(t,\delta)
	:=
	u(t+\delta,x)-u(t,x).
\end{equation}

Let \(\widetilde W\) be a space--time white noise independent of \(W\).
We define the modified temporal increment by
\begin{align}
	\widetilde{\Delta}(t,\delta)
	:={}&
	\int_0^{t(\delta)}
	\int_{\mathbb R}
	\left[
	G_{\alpha,\beta}(t+\delta-r,x-z)
	-
	G_{\alpha,\beta}(t-r,x-z)
	\right]
	\widetilde W(\mathrm dr,\mathrm dz)
	\notag\\
	&+
	\int_{t(\delta)}^{t+\delta}
	\int_{\mathbb R}
	\left[
	G_{\alpha,\beta}(t+\delta-r,x-z)
	-
	G_{\alpha,\beta}(t-r,x-z)
	\right]
	W(\mathrm dr,\mathrm dz),
	\label{eq:modified-temporal-increment}
\end{align}
where $t(\delta)
:=
t-\delta^{\kappa}$ provided $0<\delta^\kappa<1\wedge t$ and
\begin{equation}
	\kappa
	:=
	\frac{2\alpha}
	{2\alpha+\beta\kappa_*}.
\end{equation}

We use the convention
\[
G_{\alpha,\beta}(s,\cdot)=0,
\qquad s\leq0.
\]

Let \(u_0\) denote the mild solution of the corresponding additive
equation, and set
\[
\Delta(t,\delta)
:=
u_0(t+\delta,x)-u_0(t,x).
\]
Since the space--time white noises used on the two sides of
\(t(\delta)\) are independent, their concatenation is again a
space--time white noise. Therefore,
\[
\widetilde{\Delta}(t,\delta)
\overset{d}{=}
\Delta(t,\delta).
\]
Moreover, 
$\widetilde{\Delta}(t,\delta)$ 
 is independent of 
$\mathcal F_{t(\delta)}$.

At the classical endpoint \((\alpha,\beta)=(2,1)\), the above
formula gives \(\kappa_*=1\) and hence \(\kappa=4/5\), in agreement
with the exponent used for the classical stochastic heat equation
in~\cite{pospisil2007parameter}.

The following result provides a local Gaussian approximation of the
nonlinear temporal increment and will be used repeatedly in the sequel.
\begin{lemma}
	\label{lem:local-linearization}
	Set
	\begin{equation}
		\rho
		:=
		\frac{
			2[\alpha+\beta(\kappa_*-1)]
		}{
			2\alpha+\beta\kappa_*
		}.
	\end{equation}
	For any \(T>0\) and every
	\[
	0<\varepsilon<
	\min\left\{
	\rho,\,
	\frac{2\kappa\beta\kappa_*}{\alpha}
	\right\},
	\] there exists a constant
	\(C_{\varepsilon,T}<\infty\) such that
	\begin{equation}\label{eq:local-linearization}
		\mathbb E\left[
		\left|
		\bar{\Delta}(t,\delta)
		-
		\sigma\bigl(u(t(\delta),x)\bigr)
		\widetilde{\Delta}(t,\delta)
		\right|^2
		\right]
		\leq
		C_{\varepsilon,T}\,
		\delta^{\rho-\varepsilon}
	\end{equation}
	holds uniformly for all $x\in\mathbb R, 0< \delta^\kappa<t\wedge1$ and $t\leq T$.
\end{lemma}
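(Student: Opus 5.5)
Write \(D_{t,\delta}(r,z):=G_{\alpha,\beta}(t+\delta-r,x-z)-G_{\alpha,\beta}(t-r,x-z)\) and set \(a:=t(\delta)=t-\delta^\kappa\). Using \eqref{eq:mild-solution}, split
\[
\bar\Delta(t,\delta)=\int_0^{a}\!\!\int_{\mathbb R} D_{t,\delta}\,\sigma(u)\,W(\mathrm dr,\mathrm dz)+\int_{a}^{t+\delta}\!\!\int_{\mathbb R} D_{t,\delta}\,\sigma(u)\,W(\mathrm dr,\mathrm dz)=:A_1+A_2 ,
\]
and write \(\sigma(u(a,x))\widetilde\Delta(t,\delta)=B_1+B_2\), where \(B_1\) is the \(\widetilde W\)-part on \([0,a]\) and \(B_2\) is the \(W\)-part on \([a,t+\delta]\), both multiplied by \(\sigma(u(a,x))\). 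Since \(\sigma(u(a,x))\) is \(\mathcal F_a\)-measurable, \(B_2=\int_a^{t+\delta}\int D_{t,\delta}\,\sigma(u(a,x))\,W(\mathrm dr,\mathrm dz)\) is a genuine Walsh integral. We bound separately
\[
\mathbb E|A_1|^2,\qquad \mathbb E|B_1|^2,\qquad \mathbb E|A_2-B_2|^2 .
\]

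\textbf{Far-past terms.} By the It\^o isometry, the moment bound \eqref{eq:uniform-moment-bound}, and the independence of \(\widetilde W\) from \(\sigma(u(a,x))\) (which is \(W\)-measurable),
\[
\mathbb E|A_1|^2+\mathbb E|B_1|^2\le C\int_0^{a}\|G_{t+\delta-r}-G_{t-r}\|_{L^2}^2\,\mathrm dr .
\]
Lemma~\ref{lem:kernel-temporal-difference} bounds the right side by
\[
C\delta^2\int_{\delta^\kappa}^\infty s^{-2-\beta/\alpha}\,\mathrm ds=C\delta^{2-\kappa(1+\beta/\alpha)} .
\]
Substituting \(\kappa=2\alpha/(2\alpha+\beta\kappa_*)\) gives the exponent
\[
2-\frac{2(\alpha+\beta)}{2\alpha+\beta\kappa_*}=\frac{2\alpha+2\beta\kappa_*-2\beta}{2\alpha+\beta\kappa_*}=\rho .
\]

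\textbf{Near term.} By the It\^o isometry and the Lipschitz property of \(\sigma\),
\[
\mathbb E|A_2-B_2|^2\le C\int_a^{t+\delta}\!\!\int_{\mathbb R} D_{t,\delta}(r,z)^2\,\mathbb E|u(r,z)-u(a,x)|^2\,\mathrm dz\,\mathrm dr .
\]
Apply \eqref{eq:increment} with \(p=2\) and \(q<\kappa_*\):
\[
\mathbb E|u(r,z)-u(a,x)|^2\le C\bigl(|r-a|^{1-\beta/\alpha}+|x-z|^{q}\bigr),\qquad |r-a|\le\delta^\kappa+\delta\le 2\delta^\kappa .
\]
This gives two contributions.

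For the time part, \(\int_a^{t+\delta}\|D_{t,\delta}\|_{L^2}^2\,\mathrm dr\le C\int_0^\infty\|G_{s+\delta}-G_s\|^2\,\mathrm ds\le C\delta^{1-\beta/\alpha}\). To get this, scale using \eqref{eq:G-scaling-fractional}, or split at \(s=\delta\) and use \eqref{eq:L2-G-fractional} for small \(s\) and Lemma~\ref{lem:kernel-temporal-difference} for \(s>\delta\). The time part is then at most \(C\delta^{\kappa(1-\beta/\alpha)+1-\beta/\alpha}\). One must check this exponent is at least \(\rho\); I expect it is, possibly up to the \(\varepsilon\)-room.

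The space part requires weighted kernel bounds:
\[
\int_{\mathbb R}|y|^q G_s(y)^2\,\mathrm dy\le C s^{(q-1)\beta/\alpha}.
\]
This follows from scaling \eqref{eq:G-scaling-fractional} together with the tail bound \eqref{eq:G-pointwise-fractional}; finiteness requires \(q<1+2\alpha\), which holds since \(q<\kappa_*\le2\). Also needed is the difference version
\[
\int_{\mathbb R}|y|^q\,|G_{s+\delta}-G_s|^2(y)\,\mathrm dy\lesssim \min\bigl(s^{(q-1)\beta/\alpha},\ \delta^2 s^{-2+(q-1)\beta/\alpha}\bigr).
\]
For the second branch, interpolate Lemma~\ref{lem:kernel-temporal-difference} with a pointwise bound on \(\partial_sG_s\) obtained by differentiating the subordination/scaling form, or bound \(|y|^q\le \delta^{\kappa q\beta/\alpha}+|y|^q\mathbf 1_{|y|>\cdot}\) and cut off.

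Integrating the space part over \(s\in(0,\delta^\kappa+\delta)\), where \(s\) denotes the relevant time lag \(t+\delta-r\) or \(t-r\), gives a bound of the form
\[
C\delta^{1-\beta/\alpha}\,\delta^{\kappa q\beta/\alpha}\quad\text{or}\quad C\delta^{2-\kappa(1+\beta/\alpha)+\kappa\beta q/\alpha}.
\]
The exponent has to be compared with \(\rho\) after letting \(q\uparrow\kappa_*\); the \(\varepsilon\)-loss comes exactly from \(q<\kappa_*\). The condition \(\varepsilon<2\kappa\beta\kappa_*/\alpha\) in the statement suggests the relevant term is of the form \(\delta^{\rho}\cdot\delta^{\kappa\beta(\kappa_*-q)\cdots}\), balanced by the choice of \(\kappa\). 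Indeed, \(\kappa\) is chosen to balance \(2-\kappa(1+\beta/\alpha)\) against \(\kappa(1-\beta/\alpha)+\kappa\beta\kappa_*/\alpha\): equating them gives \(2=\kappa(2+\beta\kappa_*/\alpha)\), which is exactly the given \(\kappa\). So the near term should be \(\lesssim\delta^{\kappa(1-\beta/\alpha)+\kappa\beta q/\alpha}\); with \(q\) close to \(\kappa_*\) this is \(\delta^{\rho-\varepsilon}\).

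The main obstacle is this weighted spatial estimate for \(|G_{s+\delta}-G_s|^2\) weighted by \(|y|^q\), uniformly in \(x\) and \(t\le T\). My first attempt will be to bound it crudely by \(2(G_{s+\delta}^2+G_s^2)|y|^q\) and use the scaling integral, since \(s\lesssim\delta^\kappa\) throughout the near interval. This yields \(\int_0^{2\delta^\kappa}s^{(q-1)\beta/\alpha}\,\mathrm ds\sim\delta^{\kappa(1-\beta/\alpha)+\kappa q\beta/\alpha}\), which matches the target exponent without any finer difference estimate.
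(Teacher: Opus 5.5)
Your proposal is correct and follows the paper's proof essentially step for step. Your \(A_1\), \(B_1\), \(A_2-B_2\) are the paper's \(I_1\), \(I_3\), \(I_2\), and the far-past terms are bounded by \(C\delta^{2-\kappa(1+\beta/\alpha)}=C\delta^{\rho}\) in the same way. Your final treatment of the near term is also exactly what the paper does: the crude bound \(|D_{t,\delta}|^2\le 2G_{t+\delta-r}^2+2G_{t-r}^2\), the scaled weighted estimate \(\int_{\mathbb R}|y|^qG_s^2(y)\,\mathrm dy\le Cs^{(q-1)\beta/\alpha}\), and \(q\) just below \(\kappa_*\) (the paper takes \(q=\kappa_*-\alpha\varepsilon/(2\kappa\beta)\)). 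The one check you left open does hold: \((1+\kappa)(1-\beta/\alpha)\ge 2\kappa(1-\beta/\alpha)\ge\rho\), and the last inequality is equivalent to \(\kappa_*\le(\alpha-\beta)/\beta\), the same fact the paper uses to absorb its cruder time-part bound \(\delta^{2\kappa(1-\beta/\alpha)}\).
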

\begin{proof}
	
	By the mild representation of \(u\), the definition of
	\(\bar\Delta(t,\delta)\), and
	\eqref{eq:modified-temporal-increment}, we have
	\begin{align*}
		&\bar{\Delta}(t,\delta)
		-
		\sigma\bigl(u(t(\delta),x)\bigr)
		\widetilde{\Delta}(t,\delta)
		\\
		&=
		\int_0^{t(\delta)}
		\int_{\mathbb R}
		\left[
		G_{\alpha,\beta}(t+\delta-r,x-z)
		-
		G_{\alpha,\beta}(t-r,x-z)
		\right]
		\sigma(u(r,z))
		W(\mathrm dr,\mathrm dz)
		\\
		&\quad+
		\int_{t(\delta)}^{t+\delta}
		\int_{\mathbb R}
		\left[
		G_{\alpha,\beta}(t+\delta-r,x-z)
		-
		G_{\alpha,\beta}(t-r,x-z)
		\right]
		\\
		&\hspace{38mm}\times
		\left[
		\sigma(u(r,z))
		-
		\sigma\bigl(u(t(\delta),x)\bigr)
		\right]
		W(\mathrm dr,\mathrm dz)
		\\
		&\quad-
		\sigma\bigl(u(t(\delta),x)\bigr)
		\int_0^{t(\delta)}
		\int_{\mathbb R}
		\left[
		G_{\alpha,\beta}(t+\delta-r,x-z)
		-
		G_{\alpha,\beta}(t-r,x-z)
		\right]
		\widetilde W(\mathrm dr,\mathrm dz)
		\\
		&=:I_1+I_2-I_3.
	\end{align*}
	Consequently,
	\begin{equation}\label{eq:three-error-terms}
		\mathbb E\left[
		\left|
		\bar{\Delta}(t,\delta)
		-
		\sigma\bigl(u(t(\delta),x)\bigr)
		\widetilde{\Delta}(t,\delta)
		\right|^2
		\right]\leq C\sum_{i=1}^{3}\mathbb{E}|I_i|^2.
	\end{equation}
	
	We first estimate \(I_1\). By the It\^o isometry, the Lipschitz continuity
	of \(\sigma\), and the uniform moment bound of the solution,
	\begin{align*}
		\mathbb E|I_1|^2
		&\leq
		C
		\int_0^{t(\delta)}
		\int_{\mathbb R}
		\left[
		G_{\alpha,\beta}(t+\delta-r,x-z)
		-
		G_{\alpha,\beta}(t-r,x-z)
		\right]^2
		\,\mathrm dz\,\mathrm dr.
	\end{align*}
	Making the change of variables \(s=t-r\) and using
	Lemma \ref{lem:kernel-temporal-difference}, we obtain
	\begin{align}
		\mathbb E|I_1|^2
		&\leq
		C\delta^2
		\int_{\delta^\kappa}^t
		s^{-2-\beta/\alpha}\,\mathrm ds
		\notag\\
		&\leq
		C
		\delta^{2-\kappa(1+\beta/\alpha)}
		=
		C\delta^\rho.\label{eq:I1-final-bound}
	\end{align}
	
	For \(I_3\), note that
	\(\sigma(u(t(\delta),x))\) is
	\(\mathcal F_{t(\delta)}\)-measurable, whereas
	\(\widetilde W\) is independent of
	\(\mathcal F_{t(\delta)}\). Hence, by the It\^o isometry and the
	uniform moment bound of the solution,
	\begin{align*}
		\mathbb E|I_3|^2
		&=
		\mathbb E\left|
		\sigma\bigl(u(t(\delta),x)\bigr)
		\right|^2
		\\
		&\quad\times
		\int_0^{t(\delta)}
		\int_{\mathbb R}
		\left[
		G_{\alpha,\beta}(t+\delta-r,x-z)
		-
		G_{\alpha,\beta}(t-r,x-z)
		\right]^2
		\,\mathrm dz\,\mathrm dr.
	\end{align*}
	The same calculation as for \(I_1\) gives
	\begin{equation}
		\label{eq:I3-final-bound}
		\mathbb E|I_3|^2
		\leq
		C\delta^\rho.
	\end{equation}
	
	It remains to estimate \(I_2\). For \(t(\delta)\leq r\leq t+\delta\),
	\[
	0\leq t(\delta)\leq r\leq t+\delta\leq T+1.
	\]
	Hence, in the following estimates we apply the moment and increment
	bounds on the interval \([0,T+1]\), while continuing to denote the
	resulting constants by \(C_{\varepsilon,T}\). Fix
	\[
	0<\varepsilon<
	\min\left\{
	\rho,\,
	\frac{2\kappa\beta\kappa_*}{\alpha}
	\right\}.
	\] Then
	\[
	0<
	\kappa_*-\frac{\alpha\varepsilon}{2\kappa\beta}
	<
	\kappa_*.
	\]
	Since the spatial increment estimate holds for every exponent strictly
	smaller than \(\kappa_*\), we may apply \eqref{eq:increment} with
	\(p=2\) and spatial exponent
	\[
	\kappa_*-\frac{\alpha\varepsilon}{2\kappa\beta}.
	\]
	By the Lipschitz continuity of \(\sigma\), we obtain
	\begin{align}
		&\mathbb E\left|
		\sigma(u(r,z))
		-
		\sigma\bigl(u(t(\delta),x)\bigr)
		\right|^2
		\notag\\
		&\leq
		C\mathbb E
		\left|u(r,z)-u(t(\delta),x)\right|^2
		\notag\\
		&\leq
		C_{\varepsilon,T}
		\left[
		|r-t(\delta)|^{1-\beta/\alpha}
		+
		|z-x|^{
			\kappa_*-\frac{\alpha\varepsilon}{2\kappa\beta}
		}
		\right].
		\label{eq:mixed-increment-bound}
	\end{align}
	For \(t(\delta)\leq r\leq t+\delta\), we have
	\[
	0\leq
	r-t(\delta)
	\leq
	\delta^\kappa+\delta.
	\]
	Since \(0<\delta\leq1\) and \(0<\kappa<1\), we have
	\(\delta\leq\delta^\kappa\), and hence
	\begin{equation}
		\label{eq:frozen-time-increment-bound}
		|r-t(\delta)|^{1-\beta/\alpha}
		\leq
		C\delta^{\kappa(1-\beta/\alpha)}.
	\end{equation}
	
	By the It\^o isometry, \eqref{eq:mixed-increment-bound}, and inequality
	\[
	(a-b)^2\leq2a^2+2b^2,
	\]
	we obtain
	\begin{align*}
		\mathbb E|I_2|^2
		&\leq
		C_{\varepsilon,T}
		\int_{t(\delta)}^{t+\delta}
		\int_{\mathbb R}
		G_{\alpha,\beta}^2(t+\delta-r,x-z)
		\\
		&\hspace{24mm}\times
		\left[
		\delta^{\kappa(1-\beta/\alpha)}
		+
		|z-x|^{
			\kappa_*-\frac{\alpha\varepsilon}{2\kappa\beta}
		}
		\right]
		\,\mathrm dz\,\mathrm dr
		\\
		&\quad+
		C_{\varepsilon,T}
		\int_{t(\delta)}^t
		\int_{\mathbb R}
		G_{\alpha,\beta}^2(t-r,x-z)
		\notag\\
		&\hspace{24mm}\times
		\left[
		\delta^{\kappa(1-\beta/\alpha)}
		+
		|z-x|^{
			\kappa_*-\frac{\alpha\varepsilon}{2\kappa\beta}
		}
		\right]
		\,\mathrm dz\,\mathrm dr.
	\end{align*}
	
	By \eqref{eq:L2-G-fractional},
	we have
	\begin{align*}
		&\delta^{\kappa(1-\beta/\alpha)}
		\int_{t(\delta)}^{t+\delta}
		\int_{\mathbb R}
		G_{\alpha,\beta}^2(t+\delta-r,x-z)
		\,\mathrm dz\,\mathrm dr
		\\
		&\quad+
		\delta^{\kappa(1-\beta/\alpha)}
		\int_{t(\delta)}^t
		\int_{\mathbb R}
		G_{\alpha,\beta}^2(t-r,x-z)
		\,\mathrm dz\,\mathrm dr
		\\
		&\leq
		C\delta^{\kappa(1-\beta/\alpha)}
		\left[
		\int_0^{\delta^\kappa+\delta}
		s^{-\beta/\alpha}\,\mathrm ds
		+
		\int_0^{\delta^\kappa}
		s^{-\beta/\alpha}\,\mathrm ds
		\right]
		\\
		&\leq
		C\delta^{\kappa(1-\beta/\alpha)}
		\left(\delta^\kappa\right)^{1-\beta/\alpha}
		\\
		&=
		C\delta^{2\kappa(1-\beta/\alpha)}.
	\end{align*}
	
	We next estimate the contribution arising from the spatial increment.
	By the scaling property \eqref{eq:G-scaling-fractional}, for every
	\(s>0\),
	\begin{align*}
		&\int_{\mathbb R}
		G_{\alpha,\beta}^2(s,y)
		|y|^{
			\kappa_*-\frac{\alpha\varepsilon}{2\kappa\beta}
		}
		\,\mathrm dy
		\\
		&=
		s^{-2\beta/\alpha}
		\int_{\mathbb R}
		G_{\alpha,\beta}^2
		\left(
		1,s^{-\beta/\alpha}y
		\right)
		|y|^{
			\kappa_*-\frac{\alpha\varepsilon}{2\kappa\beta}
		}
		\,\mathrm dy
		\\
		&=
		s^{
			\frac{\beta}{\alpha}
			\left(
			\kappa_*-\frac{\alpha\varepsilon}{2\kappa\beta}-1
			\right)
		}
		\int_{\mathbb R}
		G_{\alpha,\beta}^2(1,w)
		|w|^{
			\kappa_*-\frac{\alpha\varepsilon}{2\kappa\beta}
		}
		\,\mathrm dw,
	\end{align*}
	where in the last equality we used the change of variables
	\(
	w=s^{-\beta/\alpha}y.
	\)
	
	Since \(\varepsilon>0\) is sufficiently small,
	\[
	0<
	\kappa_*-\frac{\alpha\varepsilon}{2\kappa\beta}
	<
	\kappa_*
	\leq2.
	\]
	Moreover, by the pointwise kernel estimate
	\eqref{eq:G-pointwise-fractional},
	\[
	G_{\alpha,\beta}(1,w)
	\leq
	C_{\alpha,\beta}
	\left(
	1\wedge\frac{1}{|w|^{1+\alpha}}
	\right).
	\]
	It follows that
	\begin{align*}
		&\int_{\mathbb R}
		G_{\alpha,\beta}^2(1,w)
		|w|^{
			\kappa_*-\frac{\alpha\varepsilon}{2\kappa\beta}
		}
		\,\mathrm dw
		\\
		&\leq
		C_{\alpha,\beta}
		\int_{|w|\leq1}
		|w|^{
			\kappa_*-\frac{\alpha\varepsilon}{2\kappa\beta}
		}
		\,\mathrm dw
		\\
		&\qquad+
		C_{\alpha,\beta}
		\int_{|w|>1}
		|w|^{
			\kappa_*-\frac{\alpha\varepsilon}{2\kappa\beta}
			-2-2\alpha
		}
		\,\mathrm dw.
	\end{align*}
	Indeed, the first integral is finite because
	\[
	\kappa_*-\frac{\alpha\varepsilon}{2\kappa\beta}>0,
	\]
	whereas the second one is finite because
	\[
	\kappa_*-\frac{\alpha\varepsilon}{2\kappa\beta}
	<\kappa_*\leq2<2\alpha+1.
	\]
	Consequently,
	\begin{equation}
		\label{eq:weighted-kernel-in-proof}
		\int_{\mathbb R}
		G_{\alpha,\beta}^2(s,y)
		|y|^{
			\kappa_*-\frac{\alpha\varepsilon}{2\kappa\beta}
		}
		\,\mathrm dy
		\leq
		C_{\varepsilon}
		s^{
			\frac{\beta}{\alpha}
			\left(
			\kappa_*-\frac{\alpha\varepsilon}{2\kappa\beta}-1
			\right)
		}.
	\end{equation}
	
	Applying \eqref{eq:weighted-kernel-in-proof}, together with the changes
	of variables
	\[
	s=t+\delta-r
	\qquad\text{and}\qquad
	s=t-r,
	\]
	respectively, we obtain
	\begin{align*}
		&\int_{t(\delta)}^{t+\delta}
		\int_{\mathbb R}
		G_{\alpha,\beta}^2(t+\delta-r,x-z)
		|z-x|^{
			\kappa_*-\frac{\alpha\varepsilon}{2\kappa\beta}
		}
		\,\mathrm dz\,\mathrm dr
		\\
		&\quad+
		\int_{t(\delta)}^t
		\int_{\mathbb R}
		G_{\alpha,\beta}^2(t-r,x-z)
		|z-x|^{
			\kappa_*-\frac{\alpha\varepsilon}{2\kappa\beta}
		}
		\,\mathrm dz\,\mathrm dr
		\\
		&\leq
		C_{\varepsilon}
		\left[
		\int_0^{\delta^\kappa+\delta}
		s^{
			\frac{\beta}{\alpha}
			\left(
			\kappa_*-\frac{\alpha\varepsilon}{2\kappa\beta}-1
			\right)
		}
		\,\mathrm ds
		+
		\int_0^{\delta^\kappa}
		s^{
			\frac{\beta}{\alpha}
			\left(
			\kappa_*-\frac{\alpha\varepsilon}{2\kappa\beta}-1
			\right)
		}
		\,\mathrm ds
		\right].
	\end{align*}
	Notice that
	\[
	\frac{\beta}{\alpha}
	\left(
	\kappa_*-\frac{\alpha\varepsilon}{2\kappa\beta}-1
	\right)
	>
	-\frac{\beta}{\alpha}
	>-1.
	\]
	Therefore, both integrals are finite. Since \(0<\delta\leq1\) and
	\(0<\kappa<1\), we also have
	\[
	\delta\leq\delta^\kappa
	\qquad\text{and}\qquad
	\delta^\kappa+\delta\leq2\delta^\kappa.
	\]
	It follows that
	\begin{align*}
		&\int_{t(\delta)}^{t+\delta}
		\int_{\mathbb R}
		G_{\alpha,\beta}^2(t+\delta-r,x-z)
		|z-x|^{
			\kappa_*-\frac{\alpha\varepsilon}{2\kappa\beta}
		}
		\,\mathrm dz\,\mathrm dr
		\\
		&\quad+
		\int_{t(\delta)}^t
		\int_{\mathbb R}
		G_{\alpha,\beta}^2(t-r,x-z)
		|z-x|^{
			\kappa_*-\frac{\alpha\varepsilon}{2\kappa\beta}
		}
		\,\mathrm dz\,\mathrm dr
		\\
		&\leq
		C_{\varepsilon}
		\delta^{
			\kappa\left[
			1+
			\frac{\beta}{\alpha}
			\left(
			\kappa_*-\frac{\alpha\varepsilon}{2\kappa\beta}-1
			\right)
			\right]
		}.
	\end{align*}
	
	Hence, we obtain
	\begin{equation}
		\mathbb E|I_2|^2
		\leq
		C_{\varepsilon,T}
		\bigg[
		\delta^{2\kappa(1-\beta/\alpha)}
		+
		\delta^{
			\kappa\left[
			1+
			\frac{\beta}{\alpha}
			\left(
			\kappa_*-\frac{\alpha\varepsilon}{2\kappa\beta}-1
			\right)
			\right]
		}
		\bigg].
	\end{equation}
	
	Since
	\[
	\kappa_*-\frac{\alpha\varepsilon}{2\kappa\beta}
	<
	\kappa_*
	\leq
	\frac{\alpha-\beta}{\beta},
	\]
	we have
	\begin{align*}
		&1+
		\frac{\beta}{\alpha}
		\left(
		\kappa_*-\frac{\alpha\varepsilon}{2\kappa\beta}-1
		\right)
		\\
		&<
		1+
		\frac{\beta}{\alpha}
		\left(
		\frac{\alpha-\beta}{\beta}-1
		\right)
		\\
		&=
		2\left(1-\frac{\beta}{\alpha}\right).
	\end{align*}
	It follows that
	\begin{align}
		\mathbb E|I_2|^2
		&\leq
		C_{\varepsilon,T}
		\delta^{
			\kappa\left[
			1+
			\frac{\beta}{\alpha}
			\left(
			\kappa_*-\frac{\alpha\varepsilon}{2\kappa\beta}-1
			\right)
			\right]
		}.
		\label{eq:I2-dominant-order}
	\end{align}
	By the definitions of \(\rho\) and \(\kappa\),
	\begin{align*}
		&\kappa\left[
		1+
		\frac{\beta}{\alpha}
		\left(
		\kappa_*-\frac{\alpha\varepsilon}{2\kappa\beta}-1
		\right)
		\right]
		\\
		&\quad=
		\kappa\left(
		1+\frac{\beta(\kappa_*-1)}{\alpha}
		\right)
		-
		\frac{\varepsilon}{2}
		\\
		&\quad=
		\rho-\frac{\varepsilon}{2}.
	\end{align*}
	Consequently,
	\begin{equation}
		\label{eq:I2-general-bound}
		\mathbb E|I_2|^2
		\leq
		C_{\varepsilon,T}\,
		\delta^{\rho-\varepsilon/2}
		\leq
		C_{\varepsilon,T}\,
		\delta^{\rho-\varepsilon},
	\end{equation}
	where the last inequality follows from \(0<\delta\leq1\). This proves
	the required estimate for \(I_2\).

	Combining \eqref{eq:I1-final-bound}, \eqref{eq:I3-final-bound}, and \eqref{eq:I2-general-bound} yields the desired estimate and completes the proof.
\end{proof}

\subsection{Normalized temporal power variations}
\label{subsec:normalized-temporal-power-variations}

We now establish the normalized temporal \(\gamma\)-variation of the
solution.
\begin{theorem}
	\label{thm:normalized-temporal-gamma-variation}
	Let \(0<T_1<T_2<\infty\) and \(x\in\mathbb R\) be fixed. For each
	\(n\geq1\), set
	\[
	\delta:=\frac{T_2-T_1}{n},
	\qquad
	t_i:=T_1+i\delta,
	\qquad i=0,1,\ldots,n.
	\]
	Then, for every fixed \(\gamma\geq2\), we have
	\begin{align}
		&n^{
			\frac{\gamma}{2}
			\left(1-\frac{\beta}{\alpha}\right)-1
		}
		\sum_{i=1}^n
		\left|
		u(t_i,x)-u(t_{i-1},x)
		\right|^\gamma
		\notag\\
		&\quad\overset{L^1(\Omega)}{\longrightarrow}
		\mu_\gamma C_{\alpha,\beta}^{\gamma/2}
		(T_2-T_1)^{
			\frac{\gamma}{2}
			\left(1-\frac{\beta}{\alpha}\right)-1
		}
		\int_{T_1}^{T_2}
		|\sigma(u(r,x))|^\gamma
		\,\mathrm dr,
		\label{eq:normalized-temporal-gamma-variation}
	\end{align}
	where
	\[
	\mu_\gamma
	:=
	\mathbb E|Z|^\gamma
	=
	\frac{
		2^{\gamma/2}
		\Gamma\left(\frac{\gamma+1}{2}\right)
	}{
		\sqrt{\pi}
	},
	\qquad Z\sim N(0,1).
	\]
	
	Moreover, for every
	\begin{equation}
		\label{eq:choice-temporal-rate}
		0<\eta<
		\frac{
			\beta\kappa_*(\alpha+\beta)
		}{
			2\alpha(2\alpha+\beta\kappa_*)
		},
	\end{equation}
	there exists a constant
	\(C_{\alpha,\beta,\gamma,T_1,T_2,\eta}<\infty\) such that, for all
	sufficiently large \(n\),
	\begin{align}
		&\sup_{x\in\mathbb R}
		\mathbb E\bigg|
		n^{
			\frac{\gamma}{2}
			\left(1-\frac{\beta}{\alpha}\right)-1
		}
		\sum_{i=1}^n
		\left|
		u(t_i,x)-u(t_{i-1},x)
		\right|^\gamma
		\notag\\
		&\qquad-
		\mu_\gamma C_{\alpha,\beta}^{\gamma/2}
		(T_2-T_1)^{
			\frac{\gamma}{2}
			\left(1-\frac{\beta}{\alpha}\right)-1
		}
		\int_{T_1}^{T_2}
		|\sigma(u(r,x))|^\gamma
		\,\mathrm dr
		\bigg|
		\notag\\
		&\quad\leq
		C_{\alpha,\beta,\gamma,T_1,T_2,\eta}
		n^{-\eta}.
		\label{eq:normalized-temporal-gamma-rate}
	\end{align}
\end{theorem}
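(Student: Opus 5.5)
We prove the rate bound \eqref{eq:normalized-temporal-gamma-rate}; $L^1$ convergence follows from it. Write $V_n:=n^{\frac{\gamma}{2}(1-\frac{\beta}{\alpha})-1}\sum_{i=1}^n|\bar\Delta(t_{i-1},\delta)|^\gamma$. Since $\delta^{\frac{\gamma}{2}(1-\beta/\alpha)}\,n^{\frac{\gamma}{2}(1-\beta/\alpha)-1}=(T_2-T_1)^{\frac{\gamma}{2}(1-\beta/\alpha)-1}\delta$, it suffices to compare
\[
\delta^{1-\frac{\gamma}{2}(1-\beta/\alpha)}\sum_{i=1}^n|\bar\Delta(t_{i-1},\delta)|^\gamma
\qquad\text{with}\qquad
\mu_\gamma C_{\alpha,\beta}^{\gamma/2}\int_{T_1}^{T_2}|\sigma(u(r,x))|^\gamma\,\mathrm dr .
\]
We take $n$ so large that $\delta^\kappa<T_1\wedge1$. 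Then Lemma~\ref{lem:local-linearization} applies at every $t_{i-1}\ge T_1$, uniformly in $x$. All constants below are uniform in $x$, because \eqref{eq:uniform-moment-bound}, \eqref{eq:increment} and the lemma are. The proof proceeds in four approximation steps.

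\textbf{Step 1 (linearization).} Set $S_i:=\sigma(u(t_{i-1}(\delta),x))$ and replace $\bar\Delta_i:=\bar\Delta(t_{i-1},\delta)$ by $S_i\widetilde\Delta_i$. We use the elementary inequality $||a|^\gamma-|b|^\gamma|\le \gamma|a-b|(|a|^{\gamma-1}+|b|^{\gamma-1})$ together with Cauchy--Schwarz. Both $\|\bar\Delta_i\|_{L^{2\gamma-2}}$ and $\|S_i\widetilde\Delta_i\|_{L^{2\gamma-2}}$ are $\lesssim\delta^{\frac12(1-\beta/\alpha)}$. For the first this follows from \eqref{eq:increment}. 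For the second it follows from the independence of $S_i$ and $\widetilde\Delta_i$, Gaussianity of $\widetilde\Delta_i\overset d=\Delta$, and \eqref{eq:Delta2}. Lemma~\ref{lem:local-linearization} then gives, per term, an error $\lesssim\delta^{\frac{\rho-\varepsilon}{2}+\frac{\gamma-1}{2}(1-\beta/\alpha)}$. After normalization and summation over $n$ terms, the total error is $\lesssim\delta^{\frac12(\rho-\varepsilon)-\frac12(1-\beta/\alpha)}$. This exponent is positive because $\rho>1-\beta/\alpha$: indeed $\rho-(1-\beta/\alpha)$ is a positive multiple of $\beta\kappa_*(\alpha+\beta)$, which is exactly where the threshold in \eqref{eq:choice-temporal-rate} comes from. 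This comparison of exponents is the step we expect to be most delicate, since it fixes the admissible rate $\eta$.

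\textbf{Step 2 (replacing by the conditional mean).} Set $c_\delta:=\mathbb E|\Delta(t_{i-1},\delta)|^2$. By Gaussianity and independence,
\[
\mathbb E\bigl[|S_i\widetilde\Delta_i|^\gamma\,\big|\,\mathcal F_{t_{i-1}(\delta)}\bigr]=\mu_\gamma|S_i|^\gamma c_\delta^{\gamma/2}.
\]
By \eqref{eq:Delta2}, $c_\delta^{\gamma/2}=C_{\alpha,\beta}^{\gamma/2}\delta^{\frac{\gamma}{2}(1-\beta/\alpha)}\bigl(1+O(\delta^{1+\beta/\alpha})\bigr)$ uniformly in $t\in[T_1,T_2]$. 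Hence replacing $c_\delta^{\gamma/2}$ by its leading term costs $O(\delta^{1+\beta/\alpha})$.

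The centered sum $\sum_i X_i$, with $X_i:=|S_i\widetilde\Delta_i|^\gamma-\mu_\gamma|S_i|^\gamma c_\delta^{\gamma/2}$, is controlled in $L^2$. Each $X_i$ is measurable with respect to $\mathcal F_{t_i}\vee\sigma(\widetilde W)$. To exploit conditional centering we use a single independent copy $\widetilde W$; conditional centering still holds because $\widetilde W$ is only used on $[0,t_i(\delta)]$, while the independence is with respect to $\mathcal F$. For $|i-j|\delta>\delta^\kappa+\delta$, i.e. $j-i\gtrsim\delta^{\kappa-1}$, we get $\mathbb E[X_iX_j]=0$ by conditioning on $\mathcal F_{t_{j-1}(\delta)}$; this requires checking that $X_i$ is measurable with respect to it, modulo the $\widetilde W$ part, which we handle by conditioning on $\mathcal F_{t_{j-1}(\delta)}\vee\sigma(\widetilde W|_{[0,t_i(\delta)]})$. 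The remaining near-diagonal pairs number $\lesssim n\cdot\delta^{\kappa-1}$, each of size $\lesssim\delta^{\gamma(1-\beta/\alpha)}$. After normalization the $L^2$ norm is therefore $\lesssim(\delta^{2}\cdot n\,\delta^{\kappa-1})^{1/2}=\delta^{\kappa/2}$.

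\textbf{Step 3 (Riemann sum).} It remains to compare $\delta\sum_i|\sigma(u(t_{i-1}(\delta),x))|^\gamma$ with $\int_{T_1}^{T_2}|\sigma(u(r,x))|^\gamma\,\mathrm dr$. Combining the Lipschitz property of $\sigma$, the same $|a|^\gamma-|b|^\gamma$ bound, the moment bound \eqref{eq:uniform-moment-bound} and the temporal part of \eqref{eq:increment} over distances $\le\delta^\kappa+\delta$, we get an $L^1$ error $\lesssim\delta^{\frac{\kappa}{2}(1-\beta/\alpha)}$.

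\textbf{Conclusion.} Collecting Steps 1--3, the total error is a power of $\delta\asymp n^{-1}$. The worst exponent comes from Step 1, with $\varepsilon$ taken small, and we check that the minimum over the steps exceeds any $\eta$ satisfying \eqref{eq:choice-temporal-rate}. Steps 2 and 3 give $\kappa/2$ and $\frac{\kappa}{2}(1-\beta/\alpha)$. These should be compared explicitly with the Step 1 exponent $\frac12(\rho-1+\beta/\alpha)$. The main obstacle, besides this exponent bookkeeping, is the conditional-centering argument in Step 2, which requires handling the independent copy $\widetilde W$ and verifying the covariance cancellation for far-apart indices.
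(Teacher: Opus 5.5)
Your Steps 1 and 3 are the paper's estimates of $J_1$ and $J_3$. Centering at the exact conditional mean $\mu_\gamma|S_i|^\gamma c_\delta^{\gamma/2}$ and splitting off the $O(\delta^{1+\beta/\alpha})$ bias is a harmless variant of the paper's $J_2$.

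The gap is in Step 2. With a \emph{single} auxiliary noise $\widetilde W$, the far off-diagonal terms do not vanish. Take $j>i$ with $t_i\le t_{j-1}(\delta)$. Then $X_i$ depends on $\widetilde W$ on $[0,t_{i-1}(\delta)]$. So does $\widetilde\Delta_j$, whose $\widetilde W$-part runs over $[0,t_{j-1}(\delta)]\supset[0,t_{i-1}(\delta)]$. Hence $\widetilde\Delta_j$ is not independent of $\mathcal F_{t_{j-1}(\delta)}\vee\sigma(\widetilde W|_{[0,t_{i-1}(\delta)]})$. Conditionally on that $\sigma$-field it is Gaussian with reduced variance and nonzero mean
\[
m_{ij}=\int_0^{t_{i-1}(\delta)}\int_{\mathbb R}\bigl[G_{\alpha,\beta}(t_j-r,x-z)-G_{\alpha,\beta}(t_{j-1}-r,x-z)\bigr]\,\widetilde W(\mathrm dr,\mathrm dz),
\]
so the conditional centering you invoke is lost. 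Concretely, for $\sigma\equiv1$ and $\gamma=2$ one has $\mathbb E[X_iX_j]=2\bigl(\mathrm{Cov}(\widetilde\Delta_i,\widetilde\Delta_j)\bigr)^2$. By Plancherel,
\[
\mathrm{Cov}(\widetilde\Delta_i,\widetilde\Delta_j)=\frac1{2\pi}\int_0^{t_{i-1}(\delta)}\int_{\mathbb R}\bigl(\widehat G_{t_i-r}(\xi)-\widehat G_{t_{i-1}-r}(\xi)\bigr)\bigl(\widehat G_{t_j-r}(\xi)-\widehat G_{t_{j-1}-r}(\xi)\bigr)\,\mathrm d\xi\,\mathrm dr>0,
\]
because $E_\beta(-\cdot)$ is strictly decreasing and so both factors are negative. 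There are of order $n^2$ such pairs, so your bound $\delta^{\kappa/2}$ for the centered sum is not justified as written.

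The paper's remedy is to give each increment its own noise. Take mutually independent $\widetilde W_1,\dots,\widetilde W_n$, jointly independent of $W$, and build $\widetilde\Delta_i$ from $\widetilde W_i$. Lemma~\ref{lem:local-linearization} still applies term by term. For $t_i\le t_{j-1}(\delta)$, the variables $S_i$, $S_j$, $\widetilde\Delta_i$ are measurable with respect to $\mathcal F_{t_{j-1}(\delta)}\vee\sigma(\widetilde W_i)$, while $\widetilde\Delta_j$ is independent of this $\sigma$-field, so $\mathbb E[X_iX_j]=0$ exactly.

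Keeping one copy is also possible, but then you must bound the residual covariances. For example, use $\mathrm{Var}(m_{ij})/c_\delta\lesssim\delta^{(1-\kappa)(1+\beta/\alpha)}$ from Lemma~\ref{lem:kernel-temporal-difference}, together with a second-order bound for $m\mapsto\mathbb E|m+sZ|^\gamma$. That is an additional argument your proposal does not contain.

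With the repair, one routine check remains. Using $\kappa_*\le(\alpha-\beta)/\beta$, verify that $\kappa/2$ and $\frac\kappa2(1-\beta/\alpha)$ are at least the threshold in \eqref{eq:choice-temporal-rate}. After that your argument coincides with the paper's.
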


\begin{proof}
	Since \(T_1>0\) and \(\delta=(T_2-T_1)/n\), for all sufficiently
	large \(n\),
	\[
	\delta^\kappa<T_1\wedge1.
	\]
	For such \(n\), define
	\[
	t_{i-1}(\delta)
	:=
	t_{i-1}-\delta^\kappa,
	\qquad i=1,\ldots,n.
	\]
	Then
	\[
	0<t_{i-1}(\delta)<t_{i-1},
	\qquad i=1,\ldots,n.
	\]
	
	Enlarging the underlying probability space if necessary, let
\(\widetilde W_1,\ldots,\widetilde W_n\) be mutually independent
space--time white noises that are jointly independent of \(W\). For each \(i=1,\ldots,n\), let
	\(\widetilde\Delta(t_{i-1},\delta)\) be the modified temporal
	increment defined by \eqref{eq:modified-temporal-increment}, with
	\(\widetilde W\) replaced by \(\widetilde W_i\). Then
	\[
	\widetilde\Delta(t_{i-1},\delta)
	\overset{d}{=}
	\Delta(t_{i-1},\delta),
	\]
	and
	\[
	\widetilde\Delta(t_{i-1},\delta)
	\quad\text{is independent of}\quad
	\mathcal F_{t_{i-1}(\delta)}.
	\]
	Moreover, Lemma \ref{lem:local-linearization} can be applied
	uniformly with \(t=t_{i-1}\) and \(T=T_2\).
	
	We decompose the difference between the two sides of
	\eqref{eq:normalized-temporal-gamma-variation} as
	\[
	J_1+J_2+J_3,
	\]
	where
	\begin{equation*}
		J_1
		:=
		n^{
			\frac{\gamma}{2}
			\left(1-\frac{\beta}{\alpha}\right)-1
		}
		\sum_{i=1}^n
		\left[
		\left|
		u(t_i,x)-u(t_{i-1},x)
		\right|^\gamma
		-
		\left|
		\sigma\bigl(u(t_{i-1}(\delta),x)\bigr)
		\widetilde\Delta(t_{i-1},\delta)
		\right|^\gamma
		\right],
	\end{equation*}
	\begin{equation*}
		J_2
		:=
		n^{
			\frac{\gamma}{2}
			\left(1-\frac{\beta}{\alpha}\right)-1
		}
		\sum_{i=1}^n
		\left|
		\sigma\bigl(u(t_{i-1}(\delta),x)\bigr)
		\right|^\gamma
		\bigg[
		\left|
		\widetilde\Delta(t_{i-1},\delta)
		\right|^\gamma
		-
		\mu_\gamma C_{\alpha,\beta}^{\gamma/2}
		\delta^{
			\frac{\gamma}{2}
			\left(1-\frac{\beta}{\alpha}\right)
		}
		\bigg],
	\end{equation*}
	and
	\begin{align*}
		J_3
		:={}&
		\mu_\gamma C_{\alpha,\beta}^{\gamma/2}
		n^{
			\frac{\gamma}{2}
			\left(1-\frac{\beta}{\alpha}\right)-1
		}
		\delta^{
			\frac{\gamma}{2}
			\left(1-\frac{\beta}{\alpha}\right)
		}
		\sum_{i=1}^n
		\left|
		\sigma\bigl(u(t_{i-1}(\delta),x)\bigr)
		\right|^\gamma
		\\
		&-
		\mu_\gamma C_{\alpha,\beta}^{\gamma/2}
		(T_2-T_1)^{
			\frac{\gamma}{2}
			\left(1-\frac{\beta}{\alpha}\right)-1
		}
		\int_{T_1}^{T_2}
		|\sigma(u(r,x))|^\gamma
		\,\mathrm dr.
	\end{align*}
 

	We estimate the three terms separately.
	
	\medskip
	\noindent
	\textit{Estimate of \(J_1\).}
	
	Using the fact
	\[
	\left|
	|a|^\gamma-|b|^\gamma
	\right|
	\leq
	C_\gamma|a-b|
	\left(
	|a|^{\gamma-1}+|b|^{\gamma-1}
	\right),\quad a,b\in\mathbb{R},
	\]
	and the Cauchy--Schwarz inequality, we obtain
	\begin{align*}
		\mathbb E|J_1|
		\leq{}&
		C_\gamma
		n^{
			\frac{\gamma}{2}
			\left(1-\frac{\beta}{\alpha}\right)-1
		}
		\sum_{i=1}^n
		\bigg(
		\mathbb E
		\Big|
		\bar\Delta(t_{i-1},\delta)
		-
		\sigma\bigl(u(t_{i-1}(\delta),x)\bigr)
		\widetilde\Delta(t_{i-1},\delta)
		\Big|^2
		\bigg)^{1/2}
		\\
		&\quad\times
		\bigg[
		\left\|
		\bar\Delta(t_{i-1},\delta)
		\right\|_{2(\gamma-1)}^{\gamma-1}
		+
		\left\|
		\sigma\bigl(u(t_{i-1}(\delta),x)\bigr)
		\widetilde\Delta(t_{i-1},\delta)
		\right\|_{2(\gamma-1)}^{\gamma-1}
		\bigg].
	\end{align*}
	
	The temporal increment estimate gives
	\[
	\left\|
	\bar\Delta(t_{i-1},\delta)
	\right\|_{2(\gamma-1)}
	\leq
	C\delta^{\frac12(1-\frac\beta\alpha)}.
	\]

	By \eqref{eq:Delta2},
	\begin{equation}
		\label{eq:gamma-additive-second-moment}
		\mathbb E
		\left[
		\widetilde\Delta^2(t_{i-1},\delta)
		\right]
		=
		C_{\alpha,\beta}
		\delta^{1-\beta/\alpha}
		+
		O(\delta^2),
	\end{equation}
	uniformly in \(i=1,\ldots,n\).
	Then by the independence of
	\(\widetilde\Delta(t_{i-1},\delta)\) and
	\(\mathcal F_{t_{i-1}(\delta)}\), the Gaussian moment formula,
	and the uniform moment bounds of \(\sigma(u)\),
	\[
	\left\|
	\sigma\bigl(u(t_{i-1}(\delta),x)\bigr)
	\widetilde\Delta(t_{i-1},\delta)
	\right\|_{2(\gamma-1)}
	\leq
	C\delta^{\frac12(1-\frac\beta\alpha)}.
	\]
	
	Choose \(\varepsilon>0\) sufficiently small so that
	\[
	0<\varepsilon<
	\frac{
		\beta\kappa_*(\alpha+\beta)
	}{
		\alpha(2\alpha+\beta\kappa_*)
	}.
	\]
	By Lemma \ref{lem:local-linearization},
	\begin{equation*}
		\mathbb E
		\Big|
		\bar\Delta(t_{i-1},\delta)-
		\sigma\bigl(u(t_{i-1}(\delta),x)\bigr)
		\widetilde\Delta(t_{i-1},\delta)
		\Big|^2\leq
		C_{\varepsilon,T_2}\,
		\delta^{
			\frac{
				2[\alpha+\beta(\kappa_*-1)]
			}{
				2\alpha+\beta\kappa_*
			}
			-\varepsilon
		}.
	\end{equation*}
	It follows that
	\begin{equation*}
		\mathbb E|J_1|
		\leq
		C_{\varepsilon,T_2}\,
		n^{
			\frac{\gamma}{2}
			\left(1-\frac{\beta}{\alpha}\right)
		}\,
		\delta^{
			\frac12
			\left[
			\frac{
				2[\alpha+\beta(\kappa_*-1)]
			}{
				2\alpha+\beta\kappa_*
			}
			-\varepsilon
			\right]
			+
			\frac{\gamma-1}{2}
			\left(1-\frac{\beta}{\alpha}\right)
		}.
	\end{equation*}
	Since \(\delta=(T_2-T_1)/n\), we obtain
	\begin{equation}\label{eq:gamma-J1-final}
		\mathbb E|J_1|
		\leq
		C_{\varepsilon,T_1,T_2}\,
		n^{
			-\frac12
			\left[
			\frac{
				\beta\kappa_*(\alpha+\beta)
			}{
				\alpha(2\alpha+\beta\kappa_*)
			}
			-\varepsilon
			\right]
		}
		\longrightarrow0,\qquad n\to \infty,
	\end{equation}
	where the convergence follows from the choice of $\varepsilon$.
	
	\medskip
	\noindent
	\textit{Estimate of \(J_2\).}
	
	Expanding its second moment, we have
	\begin{align*}
		\mathbb E|J_2|^2
		={}&
		n^{
			\gamma\left(1-\frac{\beta}{\alpha}\right)-2
		}
		\sum_{i,j=1}^n
		\mathbb E\bigg[
		\left|
		\sigma\bigl(u(t_{i-1}(\delta),x)\bigr)
		\right|^\gamma
		\left|
		\sigma\bigl(u(t_{j-1}(\delta),x)\bigr)
		\right|^\gamma
		\notag\\
		&\quad\times
		\bigg(
		\left|
		\widetilde\Delta(t_{i-1},\delta)
		\right|^\gamma
		-
		\mu_\gamma C_{\alpha,\beta}^{\gamma/2}
		\delta^{
			\frac{\gamma}{2}
			\left(1-\frac{\beta}{\alpha}\right)
		}
		\bigg)
		\notag\\
		&\quad\times
		\bigg(
		\left|
		\widetilde\Delta(t_{j-1},\delta)
		\right|^\gamma
		-
		\mu_\gamma C_{\alpha,\beta}^{\gamma/2}
		\delta^{
			\frac{\gamma}{2}
			\left(1-\frac{\beta}{\alpha}\right)
		}
		\bigg)
		\bigg]\\
		=:&J_{2,1}+J_{2,2},
	\end{align*}
	where we split the preceding double sum into the near-diagonal and
far-off-diagonal regions and denote the corresponding contributions
by \(J_{2,1}\) and \(J_{2,2}\), respectively, where
\[
|i-j|
\leq
\left\lfloor\delta^{\kappa-1}\right\rfloor+1
\]
for the near-diagonal region and
\[
|i-j|
>
\left\lfloor\delta^{\kappa-1}\right\rfloor+1
\]
for the far-off-diagonal region.
	
	When $|i-j|
	\leq
	\left\lfloor\delta^{\kappa-1}\right\rfloor+1 $, H\"older's inequality yields
	\begin{align*}
		|J_{2,1}|
		\leq{}&n^{
			\gamma\left(1-\frac{\beta}{\alpha}\right)-2
		}
	\sum_{\substack{1 \le i,j \le n \\ |i-j| \le \lfloor \delta^{\kappa-1} \rfloor + 1}}\left(
		\mathbb E
		\left|
		\sigma\bigl(u(t_{i-1}(\delta),x)\bigr)
		\right|^{4\gamma}
		\right)^{1/4}
		\left(
		\mathbb E
		\left|
		\sigma\bigl(u(t_{j-1}(\delta),x)\bigr)
		\right|^{4\gamma}
		\right)^{1/4}
		\\
		&\quad\times
		\left\|
		\left|
		\widetilde\Delta(t_{i-1},\delta)
		\right|^\gamma
		-
		\mu_\gamma C_{\alpha,\beta}^{\gamma/2}
		\delta^{
			\frac{\gamma}{2}
			\left(1-\frac{\beta}{\alpha}\right)
		}
		\right\|_4
		\\
		&\quad\times
		\left\|
		\left|
		\widetilde\Delta(t_{j-1},\delta)
		\right|^\gamma
		-
		\mu_\gamma C_{\alpha,\beta}^{\gamma/2}
		\delta^{
			\frac{\gamma}{2}
			\left(1-\frac{\beta}{\alpha}\right)
		}
		\right\|_4.
	\end{align*}
	
	Using the Gaussian moment formula and
	\eqref{eq:gamma-additive-second-moment}, we have
	\begin{align}
		&\left\|
		\left|
		\widetilde\Delta(t_{i-1},\delta)
		\right|^\gamma
		-
		\mu_\gamma C_{\alpha,\beta}^{\gamma/2}
		\delta^{
			\frac{\gamma}{2}
			\left(1-\frac{\beta}{\alpha}\right)
		}
		\right\|_4
		\notag\\
		&\leq
		\left(
		\mathbb E
		\left|
		\widetilde\Delta(t_{i-1},\delta)
		\right|^{4\gamma}
		\right)^{1/4}
		+
		\mu_\gamma C_{\alpha,\beta}^{\gamma/2}
		\delta^{
			\frac{\gamma}{2}
			\left(1-\frac{\beta}{\alpha}\right)
		}
		\notag\\
		&\leq
		C
		\left(
		\mathbb E
		\widetilde\Delta^2(t_{i-1},\delta)
		\right)^{\gamma/2}
		+
		C
		\delta^{
			\frac{\gamma}{2}
			\left(1-\frac{\beta}{\alpha}\right)
		}
		\notag\\
		&\leq
		C
		\delta^{
			\frac{\gamma}{2}
			\left(1-\frac{\beta}{\alpha}\right)
		}.
		\label{eq:gamma-centered-fourth-moment}
	\end{align}
	Therefore, by Lipschitz continuity of $\sigma$, the uniform moment bounds for \(u\), and \eqref{eq:gamma-centered-fourth-moment}, we have 
	\begin{equation}\label{eq:gamma-J2-near-final}
		|J_{2,1}|\leq
		C
		n^{
			\gamma\left(1-\frac{\beta}{\alpha}\right)-2
		}\,
		n\,\delta^{\kappa-1}
		\delta^{
			\gamma\left(1-\frac{\beta}{\alpha}\right)
		}
		\leq
		C\delta^\kappa.
	\end{equation}
	
	For $J_{2,2}$, suppose without loss of generality that
	\(i>j\) and
	\[
	i-j>
	\left\lfloor\delta^{\kappa-1}\right\rfloor+1.
	\]
	Then
	\[
	(i-j-1)\delta>\delta^\kappa,
	\]
	and hence
	\[
	t_j
	=
	t_{i-1}-(i-j-1)\delta
	<
	t_{i-1}(\delta).
	\]
	It follows that the random variables
	\[
	\sigma\bigl(u(t_{i-1}(\delta),x)\bigr),
	\qquad
	\sigma\bigl(u(t_{j-1}(\delta),x)\bigr),
	\qquad
	\widetilde\Delta(t_{j-1},\delta)
	\]
	are measurable with respect to
	\[
	\mathcal F_{t_{i-1}(\delta)}
	\vee
	\sigma(\widetilde W_j),
	\]
	where \(\sigma(\widetilde W_j)\) denotes the \(\sigma\)-algebra
	generated by the auxiliary white noise \(\widetilde W_j\).
	Moreover, \(\widetilde\Delta(t_{i-1},\delta)\) is independent of
	\(
	\mathcal F_{t_{i-1}(\delta)}
	\vee
	\sigma(\widetilde W_j).
	\)
	
	Consequently, we obtain
	\begin{align*}
		&\mathbb E\bigg[
		\left|
		\sigma\bigl(u(t_{i-1}(\delta),x)\bigr)
		\right|^\gamma
		\left|
		\sigma\bigl(u(t_{j-1}(\delta),x)\bigr)
		\right|^\gamma
		\\
		&\quad\times
		\bigg(
		\left|
		\widetilde\Delta(t_{i-1},\delta)
		\right|^\gamma
		-
		\mu_\gamma C_{\alpha,\beta}^{\gamma/2}
		\delta^{
			\frac{\gamma}{2}
			\left(1-\frac{\beta}{\alpha}\right)
		}
		\bigg)
		\\
		&\quad\times
		\bigg(
		\left|
		\widetilde\Delta(t_{j-1},\delta)
		\right|^\gamma
		-
		\mu_\gamma C_{\alpha,\beta}^{\gamma/2}
		\delta^{
			\frac{\gamma}{2}
			\left(1-\frac{\beta}{\alpha}\right)
		}
		\bigg)
		\bigg]
		\\
		&=
		\mathbb E\bigg[
		\left|
		\widetilde\Delta(t_{i-1},\delta)
		\right|^\gamma
		-
		\mu_\gamma C_{\alpha,\beta}^{\gamma/2}
		\delta^{
			\frac{\gamma}{2}
			\left(1-\frac{\beta}{\alpha}\right)
		}
		\bigg]
		\\
		&\quad\times
		\mathbb E\bigg[
		\left|
		\sigma\bigl(u(t_{i-1}(\delta),x)\bigr)
		\right|^\gamma
		\left|
		\sigma\bigl(u(t_{j-1}(\delta),x)\bigr)
		\right|^\gamma
		\\
		&\hspace{19mm}\times
		\bigg(
		\left|
		\widetilde\Delta(t_{j-1},\delta)
		\right|^\gamma
		-
		\mu_\gamma C_{\alpha,\beta}^{\gamma/2}
		\delta^{
			\frac{\gamma}{2}
			\left(1-\frac{\beta}{\alpha}\right)
		}
		\bigg)
		\bigg].
	\end{align*}
	
	Since
	\(\widetilde\Delta(t_{i-1},\delta)\) is centered Gaussian, it follows
	that
	\begin{align}
		&\mathbb E
		\left|
		\widetilde\Delta(t_{i-1},\delta)
		\right|^\gamma
		\notag\\
		&=
		\mu_\gamma
		\left(
		\mathbb E
		\widetilde\Delta^2(t_{i-1},\delta)
		\right)^{\gamma/2}
		\notag\\
		&=
		\mu_\gamma
		C_{\alpha,\beta}^{\gamma/2}
		\delta^{
			\frac{\gamma}{2}
			\left(1-\frac{\beta}{\alpha}\right)
		}
		+
		O\left(
		\delta^{
			2+
			\frac{\gamma-2}{2}
			\left(1-\frac{\beta}{\alpha}\right)
		}
		\right).
		\label{eq:gamma-additive-moment-expansion}
	\end{align}
	Hence,
	\[
	\left|\mathbb E
	\left|
	\widetilde\Delta(t_{i-1},\delta)
	\right|^\gamma
	-
	\mu_\gamma C_{\alpha,\beta}^{\gamma/2}
	\delta^{
		\frac{\gamma}{2}
		\left(1-\frac{\beta}{\alpha}\right)
	}
	\right|\leq C
	\delta^{
		2+
		\frac{\gamma-2}{2}
		\left(1-\frac{\beta}{\alpha}\right)
	}.
	\]
	On the other hand, by H\"older's inequality, the uniform moment
	bounds of \(u\), and
	\eqref{eq:gamma-centered-fourth-moment}, we obtain
	\begin{align}
		&\Bigg|
		\mathbb E\bigg[
		\left|
		\sigma\bigl(u(t_{i-1}(\delta),x)\bigr)
		\right|^\gamma
		\left|
		\sigma\bigl(u(t_{j-1}(\delta),x)\bigr)
		\right|^\gamma
		\notag\\
		&\hspace{25mm}\times
		\bigg(
		\left|
		\widetilde\Delta(t_{j-1},\delta)
		\right|^\gamma
		-
		\mu_\gamma C_{\alpha,\beta}^{\gamma/2}
		\delta^{
			\frac{\gamma}{2}
			\left(1-\frac{\beta}{\alpha}\right)
		}
		\bigg)
		\bigg]
		\Bigg|
		\notag\\
		&\leq
		\left(
		\mathbb E
		\left|
		\sigma\bigl(u(t_{i-1}(\delta),x)\bigr)
		\right|^{4\gamma}
		\right)^{1/4}
		\left(
		\mathbb E
		\left|
		\sigma\bigl(u(t_{j-1}(\delta),x)\bigr)
		\right|^{4\gamma}
		\right)^{1/4}
		\notag\\
		&\qquad\times
		\left\|
		\left|
		\widetilde\Delta(t_{j-1},\delta)
		\right|^\gamma
		-
		\mu_\gamma C_{\alpha,\beta}^{\gamma/2}
		\delta^{
			\frac{\gamma}{2}
			\left(1-\frac{\beta}{\alpha}\right)
		}
		\right\|_2
		\notag\\
		&\leq
		C
		\left\|
		\left|
		\widetilde\Delta(t_{j-1},\delta)
		\right|^\gamma
		-
		\mu_\gamma C_{\alpha,\beta}^{\gamma/2}
		\delta^{
			\frac{\gamma}{2}
			\left(1-\frac{\beta}{\alpha}\right)
		}
		\right\|_4
		\notag\\
		&\leq
		C
		\delta^{
			\frac{\gamma}{2}
			\left(1-\frac{\beta}{\alpha}\right)
		}.
		\label{eq:gamma-far-second-factor}
	\end{align}
%
%
Consequently,
	\begin{align}
		|J_{2,2}|
		&\leq
		C
		n^{
			\gamma\left(1-\frac{\beta}{\alpha}\right)-2
		}
		n^2
		\delta^{
			2+
			(\gamma-1)
			\left(1-\frac{\beta}{\alpha}\right)
		}
		\notag\\
		&=
		C
		n^{
			\gamma\left(1-\frac{\beta}{\alpha}\right)
		}
		\delta^{
			2+
			(\gamma-1)
			\left(1-\frac{\beta}{\alpha}\right)
		}
		\notag\\
		&\leq
		C\delta^{1+\beta/\alpha}.
		\label{eq:gamma-J2-far-final}
	\end{align}
	
	Combining \eqref{eq:gamma-J2-near-final} and
	\eqref{eq:gamma-J2-far-final} yields
	\begin{equation}\label{eq:gamma-J2-final}
			\mathbb E|J_2|^2
		\leq
		|J_{2,1}|+|J_{2,2}|
		\leq
		C\left(
		\delta^\kappa+\delta^{1+\beta/\alpha}
		\right).
	\end{equation}
	
	\medskip
	\noindent
	\textit{Estimate of \(J_3\).}
	
	We first rewrite it as 
	\begin{equation*}
		J_3
		=
		\mu_\gamma C_{\alpha,\beta}^{\gamma/2}
		(T_2-T_1)^{
			\frac{\gamma}{2}
			\left(1-\frac{\beta}{\alpha}\right)-1
		}
		\sum_{i=1}^n
		\int_{t_{i-1}}^{t_i}
		\bigg[
		\left|
		\sigma\bigl(u(t_{i-1}(\delta),x)\bigr)
		\right|^\gamma
		-
		|\sigma(u(r,x))|^\gamma
		\bigg]
		\,\mathrm dr.
	\end{equation*}
	
	By the Lipschitz continuity of \(\sigma\), the
	uniform moment bounds for \(u\), and the inequality
	\[
	\left||a|^\gamma-|b|^\gamma\right|
	\leq
	C_\gamma|a-b|
	\left(
	|a|^{\gamma-1}+|b|^{\gamma-1}
	\right),
	\]
	we obtain
	\begin{equation*}
		\mathbb E\left|
		\left|
		\sigma\bigl(u(t_{i-1}(\delta),x)\bigr)
		\right|^\gamma
		-
		|\sigma(u(r,x))|^\gamma
		\right|\leq
		C
		\left(
		\mathbb E
		\left|
		u(t_{i-1}(\delta),x)-u(r,x)
		\right|^2
		\right)^{1/2}.
	\end{equation*}
	For \(r\in[t_{i-1},t_i]\),
	\[
	0\leq
	r-(t_{i-1}(\delta))
	\leq
	\delta+\delta^\kappa
	\leq
	2\delta^\kappa.
	\]
	Therefore, the temporal increment estimate gives
	\[
	\mathbb E\left|
	\left|
	\sigma\bigl(u(t_{i-1}(\delta),x)\bigr)
	\right|^\gamma
	-
	|\sigma(u(r,x))|^\gamma
	\right|
	\leq
	C
	\delta^{
		\frac{\kappa}{2}
		\left(1-\frac{\beta}{\alpha}\right)
	}.
	\]
	It follows that
	\begin{align}
		\mathbb E|J_3|
		&\leq
		C
		\sum_{i=1}^n
		\int_{t_{i-1}}^{t_i}
		\delta^{
			\frac{\kappa}{2}
			\left(1-\frac{\beta}{\alpha}\right)
		}
		\,\mathrm dr
		\notag\\
		&\leq
		C
		\delta^{
			\frac{\kappa}{2}
			\left(1-\frac{\beta}{\alpha}\right)
		}
		\longrightarrow0,\qquad n\to \infty.
		\label{eq:gamma-J3-final}
	\end{align}
	
	We finally collect the preceding estimates. Fix
	\[
	0<\eta<
	\frac{
		\beta\kappa_*(\alpha+\beta)
	}{
		2\alpha(2\alpha+\beta\kappa_*)
	}.
	\]
	Since
	\begin{equation*}
		\frac12
		\left[
		\frac{
			2[\alpha+\beta(\kappa_*-1)]
		}{
			2\alpha+\beta\kappa_*
		}
		-
		\left(1-\frac{\beta}{\alpha}\right)
		\right]=
		\frac{
			\beta\kappa_*(\alpha+\beta)
		}{
			2\alpha(2\alpha+\beta\kappa_*)
		},
	\end{equation*}
	we may choose \(\varepsilon>0\) sufficiently small so that
	\[
	\frac12
	\left[
	\frac{
		2[\alpha+\beta(\kappa_*-1)]
	}{
		2\alpha+\beta\kappa_*
	}
	-
	\left(1-\frac{\beta}{\alpha}\right)
	-
	\varepsilon
	\right]
	\geq\eta.
	\]
	It then follows from \eqref{eq:gamma-J1-final} that
	\begin{equation}
		\label{eq:gamma-J1-quantitative}
		\mathbb E|J_1|
		\leq
		C_{\alpha,\beta,\gamma,T_1,T_2,\eta}\,
		\delta^\eta.
	\end{equation}
	
	By \eqref{eq:gamma-J2-final} and the Cauchy--Schwarz inequality,
	\begin{align}
		\mathbb E|J_2|
		&\leq
		\left(\mathbb E|J_2|^2\right)^{1/2}
		\notag\\
		&\leq
		C_{\alpha,\beta,\gamma,T_1,T_2}
		\left(
		\delta^{\kappa/2}
		+
		\delta^{\frac12(1+\beta/\alpha)}
		\right).
		\label{eq:gamma-J2-quantitative}
	\end{align}
	Moreover, \eqref{eq:gamma-J3-final} gives
	\begin{equation}
		\label{eq:gamma-J3-quantitative}
		\mathbb E|J_3|
		\leq
		C_{\alpha,\beta,\gamma,T_1,T_2}\,
		\delta^{
			\frac{\kappa}{2}
			\left(1-\frac{\beta}{\alpha}\right)
		}.
	\end{equation}
	
	Since
	\[
	\kappa_*
	\leq
	\frac{\alpha-\beta}{\beta}
	\qquad\text{and}\qquad
	\kappa
	=
	\frac{2\alpha}{2\alpha+\beta\kappa_*},
	\]
	we have
	\begin{align*}
		\frac{
			\beta\kappa_*(\alpha+\beta)
		}{
			2\alpha(2\alpha+\beta\kappa_*)
		}
		&\leq
		\frac{\kappa}{2},
		\\
		\frac{
			\beta\kappa_*(\alpha+\beta)
		}{
			2\alpha(2\alpha+\beta\kappa_*)
		}
		&\leq
		\frac12
		\left(1+\frac{\beta}{\alpha}\right),
		\\
		\frac{
			\beta\kappa_*(\alpha+\beta)
		}{
			2\alpha(2\alpha+\beta\kappa_*)
		}
		&\leq
		\frac{\kappa}{2}
		\left(1-\frac{\beta}{\alpha}\right).
	\end{align*}
	Therefore, the powers appearing in
	\eqref{eq:gamma-J2-quantitative} and
	\eqref{eq:gamma-J3-quantitative} are not smaller than \(\eta\).
	Since \(0<\delta\leq1\), combining
	\eqref{eq:gamma-J1-quantitative}--
	\eqref{eq:gamma-J3-quantitative} yields
	\[
	\mathbb E|J_1+J_2+J_3|
	\leq
	C_{\alpha,\beta,\gamma,T_1,T_2,\eta}
	\delta^\eta.
	\]
	All the preceding estimates are uniform with respect to
	\(x\in\mathbb R\). Hence,
	\begin{align*}
		&\sup_{x\in\mathbb R}
		\mathbb E\bigg|
		n^{
			\frac{\gamma}{2}
			\left(1-\frac{\beta}{\alpha}\right)-1
		}
		\sum_{i=1}^n
		|u(t_i,x)-u(t_{i-1},x)|^\gamma
		\\
		&\qquad-
		\mu_\gamma C_{\alpha,\beta}^{\gamma/2}
		(T_2-T_1)^{
			\frac{\gamma}{2}
			\left(1-\frac{\beta}{\alpha}\right)-1
		}
		\int_{T_1}^{T_2}
		|\sigma(u(r,x))|^\gamma
		\,\mathrm dr
		\bigg|
		\\
		&\qquad\leq
		C_{\alpha,\beta,\gamma,T_1,T_2,\eta}
		\delta^\eta
		\leq
		C_{\alpha,\beta,\gamma,T_1,T_2,\eta}
		n^{-\eta}.
	\end{align*}
	This proves both
	\eqref{eq:normalized-temporal-gamma-variation} and
	\eqref{eq:normalized-temporal-gamma-rate}.
\end{proof}

\begin{corollary}
	\label{cor:normalized-temporal-quadratic-variation}
	Let \(0<T_1<T_2<\infty\) and \(x\in\mathbb R\) be fixed. Set
	\[
	\delta:=\frac{T_2-T_1}{n},
	\qquad
	t_i:=T_1+i\delta,
	\qquad i=0,1,\ldots,n.
	\]
	Then
	\begin{equation}
		\label{eq:normalized-temporal-quadratic-variation}
		\frac{1}{n^{\beta/\alpha}}
		\sum_{i=1}^n
		\left[
		u(t_i,x)-u(t_{i-1},x)
		\right]^2
		\overset{L^1(\Omega)}{\longrightarrow}
		C_{\alpha,\beta}
		(T_2-T_1)^{-\beta/\alpha}
		\int_{T_1}^{T_2}
		\sigma^2(u(r,x))
		\,\mathrm dr.
	\end{equation}
\end{corollary}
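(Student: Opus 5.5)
This corollary is the case \(\gamma=2\) of Theorem \ref{thm:normalized-temporal-gamma-variation}, so I would simply specialize that theorem and simplify the constants. The admissibility condition \(\gamma\geq2\) holds with equality. Recall \(\mu_2=\mathbb E|Z|^2=1\); this agrees with the formula \(2\Gamma(3/2)/\sqrt\pi=1\). Hence \(\mu_2C_{\alpha,\beta}^{2/2}=C_{\alpha,\beta}\). Also \(|u(t_i,x)-u(t_{i-1},x)|^2=[u(t_i,x)-u(t_{i-1},x)]^2\) and \(|\sigma(u(r,x))|^2=\sigma^2(u(r,x))\).

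Next I would compute the exponents. With \(\gamma=2\), the normalizing power of \(n\) is
\[
\frac{2}{2}\left(1-\frac{\beta}{\alpha}\right)-1=-\frac{\beta}{\alpha},
\]
so the prefactor \(n^{\frac{\gamma}{2}(1-\frac{\beta}{\alpha})-1}\) becomes \(n^{-\beta/\alpha}=1/n^{\beta/\alpha}\). The same computation turns the factor \((T_2-T_1)^{\frac{\gamma}{2}(1-\frac{\beta}{\alpha})-1}\) into \((T_2-T_1)^{-\beta/\alpha}\). Substituting these into \eqref{eq:normalized-temporal-gamma-variation} gives exactly \eqref{eq:normalized-temporal-quadratic-variation}, with convergence in \(L^1(\Omega)\).

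There is no genuine obstacle here; the only point to check is that nothing in the theorem's proof breaks at the endpoint \(\gamma=2\). The inequality \(||a|^\gamma-|b|^\gamma|\leq C_\gamma|a-b|(|a|^{\gamma-1}+|b|^{\gamma-1})\) and the moment norms \(\|\cdot\|_{2(\gamma-1)}=\|\cdot\|_2\) used for \(J_1\) remain valid at \(\gamma=2\), so the specialization is legitimate. As a by-product, \eqref{eq:normalized-temporal-gamma-rate} with \(\gamma=2\) yields the rate \(n^{-\eta}\) uniformly in \(x\) for the quadratic variation as well.
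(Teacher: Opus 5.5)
Your proposal is correct and matches the paper's proof: the corollary follows by setting \(\gamma=2\) in Theorem~\ref{thm:normalized-temporal-gamma-variation} and using \(\mu_2=1\) together with \(\frac{\gamma}{2}\left(1-\frac{\beta}{\alpha}\right)-1=-\frac{\beta}{\alpha}\). Your extra check at the endpoint and the inherited uniform rate \(n^{-\eta}\) are harmless additions, since the theorem already covers \(\gamma=2\).
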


\begin{proof}
	Taking \(\gamma=2\) in Theorem
	\ref{thm:normalized-temporal-gamma-variation}, and using
	\[
	\mu_2=1,
	\qquad
	\frac{\gamma}{2}
	\left(1-\frac{\beta}{\alpha}\right)-1
	=
	-\frac{\beta}{\alpha},
	\]
	gives the desired result.
\end{proof}

\begin{corollary}
	\label{cor:critical-temporal-variation}
	Let \(0<T_1<T_2<\infty\) and \(x\in\mathbb R\) be fixed. Set
	\[
	\delta:=\frac{T_2-T_1}{n},
	\qquad
	t_i:=T_1+i\delta,
	\qquad i=0,1,\ldots,n.
	\]
	Then
	\begin{equation}\label{eq:critical-temporal-variation}
		\sum_{i=1}^n
		\left|
		u(t_i,x)-u(t_{i-1},x)
		\right|^{\frac{2\alpha}{\alpha-\beta}}
		\overset{L^1(\Omega)}{\longrightarrow}
		\mu_{\frac{2\alpha}{\alpha-\beta}}
		C_{\alpha,\beta}^{\frac{\alpha}{\alpha-\beta}}
		\int_{T_1}^{T_2}
		\left|
		\sigma(u(r,x))
		\right|^{\frac{2\alpha}{\alpha-\beta}}
		\,\mathrm dr.
	\end{equation}
\end{corollary}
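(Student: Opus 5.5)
This corollary is the special case \(\gamma=p_{\alpha,\beta}:=2\alpha/(\alpha-\beta)\) of Theorem \ref{thm:normalized-temporal-gamma-variation}, so the plan is simply to substitute this value and simplify the exponents. First I verify that \(p_{\alpha,\beta}\) is admissible, i.e.\ \(p_{\alpha,\beta}\geq2\). Since \(0<\beta<1<\alpha\), we have \(0<\alpha-\beta<\alpha\). Hence \(2\alpha/(\alpha-\beta)>2\), and the theorem applies with \(\gamma=p_{\alpha,\beta}\).

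Next I compute the normalizing exponent:
\[
\frac{\gamma}{2}\left(1-\frac{\beta}{\alpha}\right)-1
=\frac{\alpha}{\alpha-\beta}\cdot\frac{\alpha-\beta}{\alpha}-1=0 .
\]
Thus both the factor \(n^{\frac{\gamma}{2}(1-\frac{\beta}{\alpha})-1}\) and the factor \((T_2-T_1)^{\frac{\gamma}{2}(1-\frac{\beta}{\alpha})-1}\) in \eqref{eq:normalized-temporal-gamma-variation} equal \(1\). Moreover \(C_{\alpha,\beta}^{\gamma/2}=C_{\alpha,\beta}^{\alpha/(\alpha-\beta)}\) and \(\mu_\gamma=\mu_{2\alpha/(\alpha-\beta)}\).

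Substituting these identities into \eqref{eq:normalized-temporal-gamma-variation} yields \eqref{eq:critical-temporal-variation} directly, with the same \(L^1(\Omega)\) convergence. The rate \eqref{eq:normalized-temporal-gamma-rate} also carries over: the error is \(O(n^{-\eta})\), uniformly in \(x\). There is no real obstacle; the only point to check is \(\gamma\geq2\), which is immediate from \(\beta>0\).
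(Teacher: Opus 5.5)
Your proposal is correct and is the paper's argument: substitute \(\gamma=2\alpha/(\alpha-\beta)\) into Theorem~\ref{thm:normalized-temporal-gamma-variation} and note that \(\frac{\gamma}{2}(1-\frac{\beta}{\alpha})-1=0\), so both the power of \(n\) and the power of \(T_2-T_1\) drop out. Your explicit check that \(2\alpha/(\alpha-\beta)>2\), which follows from \(0<\beta<\alpha\), is a welcome addition, since the paper applies the theorem without stating that this \(\gamma\) is admissible.
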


\begin{proof}
	Taking
	\[
	\gamma=\frac{2\alpha}{\alpha-\beta}
	\]
	in Theorem \ref{thm:normalized-temporal-gamma-variation}, we have
	\[
	\frac{\gamma}{2}
	\left(1-\frac{\beta}{\alpha}\right)=1.
	\]
	Consequently, both the normalizing power of \(n\) and the power of
	\(T_2-T_1\) in
	\eqref{eq:normalized-temporal-gamma-variation} are equal to zero.
	This gives \eqref{eq:critical-temporal-variation}.
\end{proof}

\subsection{Spatially averaged temporal power variations}
\label{subsec:spatially-averaged-temporal-power-variations}

Motivated by Hildebrandt and Trabs~\cite{Hildebrandt2021}, we next
average the normalized temporal \(\gamma\)-variations over spatial
locations. The uniform estimate
\eqref{eq:normalized-temporal-gamma-rate} allows the numbers of
temporal and spatial observations to tend to infinity jointly.

\begin{proposition}
	\label{pro:space-averaged-temporal-gamma-variation}
	Let \(0<T_1<T_2<\infty\), and set
	\[
	\delta:=\frac{T_2-T_1}{n},
	\qquad
	t_i:=T_1+i\delta,
	\qquad i=0,1,\ldots,n.
	\]
	For \(m,n\geq1\) and every fixed \(\gamma\geq2\), define
	\begin{align}
		V_{n,m}^{(\gamma)}(u)
		:={}&
		\frac1m
		\sum_{j=0}^{m-1}
		n^{
			\frac{\gamma}{2}
			\left(1-\frac{\beta}{\alpha}\right)-1
		}
		\sum_{i=1}^n
		\left|
		u(t_i,x_j)-u(t_{i-1},x_j)
		\right|^\gamma,
		\label{eq:space-averaged-gamma-variation}
	\end{align}
	where
	\[
	x_j:=\frac jm,
	\qquad j=0,1,\ldots,m.
	\]
	Then, as \(n,m\to\infty\),
	\begin{equation}\label{eq:space-averaged-gamma-limit}
		V_{n,m}^{(\gamma)}(u)
		\overset{L^1(\Omega)}{\longrightarrow}
		\mu_\gamma C_{\alpha,\beta}^{\gamma/2}
		(T_2-T_1)^{
			\frac{\gamma}{2}
			\left(1-\frac{\beta}{\alpha}\right)-1
		}
		\int_0^1
		\int_{T_1}^{T_2}
		|\sigma(u(t,x))|^\gamma
		\,\mathrm dt\,\mathrm dx.
	\end{equation}
	
	More precisely, for every
	\[
	0<\eta<
	\frac{
		\beta\kappa_*(\alpha+\beta)
	}{
		2\alpha(2\alpha+\beta\kappa_*)
	}
	\]
	and every
	\[
	0<\zeta<\frac{\kappa_*}{2},
	\]
	there exists a constant
	\(C_{\eta,\zeta,T_1,T_2,\gamma}<\infty\) such that, for all
	sufficiently large \(n\) and \(m\),
	\begin{align}
		&\mathbb E\bigg|
		V_{n,m}^{(\gamma)}(u)
		-
		\mu_\gamma C_{\alpha,\beta}^{\gamma/2}
		(T_2-T_1)^{
			\frac{\gamma}{2}
			\left(1-\frac{\beta}{\alpha}\right)-1
		}
		\notag\\
		&\hspace{35mm}\times
		\int_0^1
		\int_{T_1}^{T_2}
		|\sigma(u(t,x))|^\gamma
		\,\mathrm dt\,\mathrm dx
		\bigg|
		\notag\\
		&\qquad\leq
		C_{\eta,\zeta,T_1,T_2,\gamma}
		\left(
		n^{-\eta}+m^{-\zeta}
		\right).
		\label{eq:space-averaged-gamma-rate}
	\end{align}
\end{proposition}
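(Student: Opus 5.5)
Write $K_{n,\gamma}:=\mu_\gamma C_{\alpha,\beta}^{\gamma/2}(T_2-T_1)^{\frac{\gamma}{2}(1-\frac{\beta}{\alpha})-1}$ and $\Phi(x):=\int_{T_1}^{T_2}|\sigma(u(r,x))|^\gamma\,\mathrm dr$. The plan is a triangle-inequality splitting into a temporal error and a spatial Riemann-sum error:
\[
V_{n,m}^{(\gamma)}(u)-K_{n,\gamma}\int_0^1\Phi(x)\,\mathrm dx
=\frac1m\sum_{j=0}^{m-1}\Bigl[V_n^{(\gamma)}(x_j)-K_{n,\gamma}\Phi(x_j)\Bigr]
+K_{n,\gamma}\Bigl[\frac1m\sum_{j=0}^{m-1}\Phi(x_j)-\int_0^1\Phi(x)\,\mathrm dx\Bigr],
\]
where $V_n^{(\gamma)}(x)$ denotes the normalized single-site variation of Theorem \ref{thm:normalized-temporal-gamma-variation}. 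Here $K_{n,\gamma}$ is actually independent of $n$.

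\emph{Temporal term.} The first term is handled directly by the uniform rate \eqref{eq:normalized-temporal-gamma-rate}. Since the bound there holds with $\sup_{x\in\mathbb R}$, averaging over $j$ gives
\[
\mathbb E\Bigl|\frac1m\sum_j[\cdots]\Bigr|\le \sup_x \mathbb E|V_n^{(\gamma)}(x)-K\Phi(x)|\le Cn^{-\eta}.
\]
This holds for all large $n$, independently of $m$. This uniformity is exactly why no coupling between $n$ and $m$ is required.

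\emph{Spatial term.} I would write
\[
\frac1m\sum_{j}\Phi(x_j)-\int_0^1\Phi=\sum_{j=0}^{m-1}\int_{x_j}^{x_{j+1}}\bigl(\Phi(x_j)-\Phi(x)\bigr)\,\mathrm dx,
\]
and use Fubini to bound its $L^1$ norm by
\[
\sum_j\int_{x_j}^{x_{j+1}}\int_{T_1}^{T_2}\mathbb E\bigl||\sigma(u(r,x_j))|^\gamma-|\sigma(u(r,x))|^\gamma\bigr|\,\mathrm dr\,\mathrm dx.
\]
As in the estimate of $J_3$, the elementary inequality $||a|^\gamma-|b|^\gamma|\le C|a-b|(|a|^{\gamma-1}+|b|^{\gamma-1})$ applies next. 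Then Cauchy--Schwarz, the Lipschitz property and linear growth of $\sigma$, and the moment bound \eqref{eq:uniform-moment-bound} reduce the integrand to $C(\mathbb E|u(r,x_j)-u(r,x)|^2)^{1/2}$. By \eqref{eq:increment} with $p=2$ and spatial exponent $q=2\zeta<\kappa_*$, this is at most $C|x-x_j|^{\zeta}\le Cm^{-\zeta}$. Integrating over the unit square yields $Cm^{-\zeta}$.

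Combining the two bounds gives \eqref{eq:space-averaged-gamma-rate}, and \eqref{eq:space-averaged-gamma-limit} follows as $n,m\to\infty$. There is no real obstacle, since the hard work sits in the uniformity in $x$ of Theorem \ref{thm:normalized-temporal-gamma-variation}. The only points to check are the following.
\begin{itemize}
\item The constants in \eqref{eq:increment} and in the moment bounds are uniform in $x$, and the time range $[T_1,T_2]$ is bounded.
\item The exponent bookkeeping is right: $q=2\zeta<\kappa_*$ exactly matches the admissible range $\zeta<\kappa_*/2$.
\item The "sufficiently large $n$" condition of the theorem is independent of $x$, so it can be used inside the spatial average.
\end{itemize}
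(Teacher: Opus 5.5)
Your proposal is correct and follows essentially the same route as the paper. The paper splits the error into the same two pieces: an averaged single-site temporal error, bounded by $Cn^{-\eta}$ via the $x$-uniform rate in Theorem~\ref{thm:normalized-temporal-gamma-variation}, and a spatial Riemann-sum error, bounded by $Cm^{-\zeta}$ via the power inequality, Cauchy--Schwarz, the moment bounds, and the spatial increment estimate with exponent $2\zeta<\kappa_*$.
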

\begin{proof}
	We decompose the difference between $V_{n,m}^{(\gamma)}(u)$ and its limit in
	\eqref{eq:space-averaged-gamma-limit} as
	\[
	D_1+D_2,
	\]
	where
	\begin{equation*}
		D_1
		:=
		V_{n,m}^{(\gamma)}(u)-
		\frac1m
		\sum_{j=0}^{m-1}
		\mu_\gamma C_{\alpha,\beta}^{\gamma/2}
		(T_2-T_1)^{
			\frac{\gamma}{2}
			\left(1-\frac{\beta}{\alpha}\right)-1
		}
		\int_{T_1}^{T_2}
		|\sigma(u(t,x_j))|^\gamma
		\,\mathrm dt
	\end{equation*}
	and
	\begin{align*}
		D_2
		:={}&
		\frac1m
		\sum_{j=0}^{m-1}
		\mu_\gamma C_{\alpha,\beta}^{\gamma/2}
		(T_2-T_1)^{
			\frac{\gamma}{2}
			\left(1-\frac{\beta}{\alpha}\right)-1
		}
		\int_{T_1}^{T_2}
		|\sigma(u(t,x_j))|^\gamma
		\,\mathrm dt
		\\
		&-
		\mu_\gamma C_{\alpha,\beta}^{\gamma/2}
		(T_2-T_1)^{
			\frac{\gamma}{2}
			\left(1-\frac{\beta}{\alpha}\right)-1
		}
		\int_0^1
		\int_{T_1}^{T_2}
		|\sigma(u(t,x))|^\gamma
		\,\mathrm dt\,\mathrm dx.
	\end{align*}
	
	For \(\eta\) satisfying \eqref{eq:choice-temporal-rate}, the triangle
	inequality and \eqref{eq:normalized-temporal-gamma-rate} yield
	\begin{align}
		\mathbb E|D_1|
		&\leq
		\frac1m
		\sum_{j=0}^{m-1}
		\mathbb E\bigg|
		n^{
			\frac{\gamma}{2}
			\left(1-\frac{\beta}{\alpha}\right)-1
		}
		\sum_{i=1}^n
		|u(t_i,x_j)-u(t_{i-1},x_j)|^\gamma
		\notag\\
		&\qquad-
		\mu_\gamma C_{\alpha,\beta}^{\gamma/2}
		(T_2-T_1)^{
			\frac{\gamma}{2}
			\left(1-\frac{\beta}{\alpha}\right)-1
		}
		\int_{T_1}^{T_2}
		|\sigma(u(t,x_j))|^\gamma
		\,\mathrm dt
		\bigg|
		\notag\\
		&\leq
		C_{\alpha,\beta,\gamma,T_1,T_2,\eta}
		n^{-\eta}.
		\label{eq:space-average-D1}
	\end{align}
	
	We next estimate \(D_2\). Since
	\[
	\frac1m
	\sum_{j=0}^{m-1}
	\int_{T_1}^{T_2}
	|\sigma(u(t,x_j))|^\gamma
	\,\mathrm dt
	=
	\sum_{j=0}^{m-1}
	\int_{x_j}^{x_{j+1}}
	\int_{T_1}^{T_2}
	|\sigma(u(t,x_j))|^\gamma
	\,\mathrm dt\,\mathrm dx,
	\]
	we obtain
	\begin{equation*}
		\mathbb E|D_2|
		\leq
		C_{\alpha,\beta,\gamma,T_1,T_2}
		\sum_{j=0}^{m-1}
		\int_{x_j}^{x_{j+1}}
		\int_{T_1}^{T_2}
		\mathbb E\left[\left|
		|\sigma(u(t,x_j))|^\gamma
		-
		|\sigma(u(t,x))|^\gamma
		\right|\right]
		\,\mathrm dt\,\mathrm dx.
	\end{equation*}
	
	Using
	\[
	\left|
	|a|^\gamma-|b|^\gamma
	\right|
	\leq
	C_\gamma
	|a-b|
	\left(
	|a|^{\gamma-1}+|b|^{\gamma-1}
	\right),
	\]
	the Lipschitz continuity of \(\sigma\), the uniform
	moment bounds for \(u\), and the Cauchy--Schwarz inequality, we have
	\begin{equation*}
		\mathbb E\left|
		|\sigma(u(t,x_j))|^\gamma
		-
		|\sigma(u(t,x))|^\gamma
		\right|\leq
		C_{\gamma,T_2}
		\left(
		\mathbb E
		|u(t,x_j)-u(t,x)|^2
		\right)^{1/2}.
	\end{equation*}
	For every
	\[
	0<\zeta<\frac{\kappa_*}{2},
	\]
	the spatial increment estimate can be applied with exponent
	\(2\zeta<\kappa_*\), giving
	\[
	\mathbb E
	|u(t,x_j)-u(t,x)|^2
	\leq
	C_{\zeta,T_2}|x_j-x|^{2\zeta}.
	\]
	Consequently,
	\[
	\mathbb E\left|
	|\sigma(u(t,x_j))|^\gamma
	-
	|\sigma(u(t,x))|^\gamma
	\right|
	\leq
	C_{\zeta,\gamma,T_2}
	|x_j-x|^\zeta.
	\]
	It follows that
	\begin{align}
		\mathbb E|D_2|
		&\leq
		C_{\zeta,\gamma,T_1,T_2}
		\sum_{j=0}^{m-1}
		\int_{x_j}^{x_{j+1}}
		|x_j-x|^\zeta
		\,\mathrm dx
		\notag\\
		&\leq
		C_{\zeta,\gamma,T_1,T_2}
		m\left(\frac1m\right)^{1+\zeta}
		\notag\\
		&=
		C_{\zeta,\gamma,T_1,T_2}
		m^{-\zeta}.
		\label{eq:space-average-D2}
	\end{align}
	
	Combining \eqref{eq:space-average-D1} and
	\eqref{eq:space-average-D2}, we obtain
	\eqref{eq:space-averaged-gamma-rate}. The convergence in
	\eqref{eq:space-averaged-gamma-limit} follows immediately.
\end{proof}

\begin{remark}
	\label{rem:space-averaged-special-cases}
	Proposition \ref{pro:space-averaged-temporal-gamma-variation}
	contains the spatially averaged normalized temporal quadratic variation and the
	spatially averaged higher order temporal variation as special cases.
	
	When \(\gamma=2\),
	\begin{align*}
		&\frac1m
		\sum_{j=0}^{m-1}
		\frac1{n^{\beta/\alpha}}
		\sum_{i=1}^n
		\left[
		u(t_i,x_j)-u(t_{i-1},x_j)
		\right]^2
		\\
		&\quad\overset{L^1(\Omega)}{\longrightarrow}
		C_{\alpha,\beta}
		(T_2-T_1)^{-\beta/\alpha}
		\int_0^1
		\int_{T_1}^{T_2}
		\sigma^2(u(t,x))
		\,\mathrm dt\,\mathrm dx.
	\end{align*}
	
	When
	\[
	\gamma=\frac{2\alpha}{\alpha-\beta},
	\]
	the temporal normalization factor equals one, and
	\begin{align*}
		&\frac1m
		\sum_{j=0}^{m-1}
		\sum_{i=1}^n
		\left|
		u(t_i,x_j)-u(t_{i-1},x_j)
		\right|^{\frac{2\alpha}{\alpha-\beta}}
		\\
		&\quad\overset{L^1(\Omega)}{\longrightarrow}
		\mu_{\frac{2\alpha}{\alpha-\beta}}
		C_{\alpha,\beta}^{\frac{\alpha}{\alpha-\beta}}
		\int_0^1
		\int_{T_1}^{T_2}
		\left|
		\sigma(u(t,x))
		\right|^{\frac{2\alpha}{\alpha-\beta}}
		\,\mathrm dt\,\mathrm dx.
	\end{align*}
\end{remark}

\section{Applications to parameter estimation}
\label{sec:applications}

In this section, we use the temporal variation results established
above to construct consistent estimators for the drift parameter
\(\nu>0\). We subsequently show that the same high-frequency
observations also yield a consistent estimator for the ratio
\(\beta/\alpha\).

Throughout this section, the coefficient \(\sigma\) is assumed to be
known. For the estimation of \(\nu\) in Subsection~\ref{sebsec:4.1}, the fractional
orders \(\alpha\) and \(\beta\) are also assumed to be known. The solution is observed over a fixed time interval
\([T_1,T_2]\), where
\[
0<T_1<T_2<\infty.
\]
For each \(n\geq1\), let
\[
\delta:=\frac{T_2-T_1}{n},
\qquad
t_i:=T_1+i\delta,
\qquad i=0,1,\ldots,n.
\]

We first restore the dependence on \(\nu\), which was set equal to one
in the preceding section. Let \(u_\nu\) denote the mild solution
corresponding to the kernel \(G_{\alpha,\beta}^{(\nu)}\). Its Fourier
transform is given by
\[
\mathcal F
G_{\alpha,\beta}^{(\nu)}(t,\cdot)(\xi)
=
E_\beta\left(
-\nu t^\beta|\xi|^\alpha
\right).
\]
A change of variables in the Fourier inversion formula gives
\begin{equation}
	\label{eq:kernel-nu-scaling}
	G_{\alpha,\beta}^{(\nu)}(t,x)
	=
	\nu^{-1/\alpha}
	G_{\alpha,\beta}^{(1)}
	\left(
	t,\nu^{-1/\alpha}x
	\right).
\end{equation}
Consequently,
\begin{align}
	&\int_{\mathbb R}
	\left|
	G_{\alpha,\beta}^{(\nu)}(t+\delta,y)
	-
	G_{\alpha,\beta}^{(\nu)}(t,y)
	\right|^2
	\,\mathrm dy
	\notag\\
	&\qquad=
	\nu^{-1/\alpha}
	\int_{\mathbb R}
	\left|
	G_{\alpha,\beta}^{(1)}(t+\delta,y)
	-
	G_{\alpha,\beta}^{(1)}(t,y)
	\right|^2
	\,\mathrm dy.
	\label{eq:kernel-nu-temporal-difference}
\end{align}
Thus the leading constant in the second moment of an additive temporal
increment is
\[
C_{\alpha,\beta,\nu}
=
\nu^{-1/\alpha}C_{\alpha,\beta},
\]
where \(C_{\alpha,\beta}\) denotes the constant corresponding to
\(\nu=1\).

Repeating the argument in Theorem
\ref{thm:normalized-temporal-gamma-variation}, with
\(C_{\alpha,\beta}\) replaced by
\(\nu^{-1/\alpha}C_{\alpha,\beta}\), yields, for every fixed
\(\gamma\geq2\),
\begin{align}
	&n^{
		\frac{\gamma}{2}
		\left(1-\frac{\beta}{\alpha}\right)-1
	}
	\sum_{i=1}^n
	\left|
	u_\nu(t_i,x)-u_\nu(t_{i-1},x)
	\right|^\gamma
	\notag\\
	&\quad\xrightarrow{L^1(\Omega)}
	\mu_\gamma C_{\alpha,\beta}^{\gamma/2}
	\nu^{-\gamma/(2\alpha)}
	(T_2-T_1)^{
		\frac{\gamma}{2}
		\left(1-\frac{\beta}{\alpha}\right)-1
	}
	\int_{T_1}^{T_2}
	|\sigma(u_\nu(r,x))|^\gamma
	\,\mathrm dr.
	\label{eq:temporal-gamma-variation-general-nu}
\end{align}

Similarly, the spatially averaged variation satisfies
\begin{equation}\label{eq:space-averaged-gamma-variation-general-nu}
	V_{n,m}^{(\gamma)}(u_\nu)
	\xrightarrow{L^1(\Omega)}
	\mu_\gamma C_{\alpha,\beta}^{\gamma/2}
	\nu^{-\gamma/(2\alpha)}
	(T_2-T_1)^{
		\frac{\gamma}{2}
		\left(1-\frac{\beta}{\alpha}\right)-1
	}
	\int_0^1
	\int_{T_1}^{T_2}
	|\sigma(u_\nu(t,x))|^\gamma
	\,\mathrm dt\,\mathrm dx.
\end{equation}

\subsection{Estimation of the drift parameter}\label{sebsec:4.1}

We first construct an estimator from temporal observations at a fixed
spatial point.

\begin{proposition}
	\label{prop:nu-estimator-temporal}
	Let \(x\in\mathbb R\) and \(\gamma\geq2\) be fixed.
	Define
	\begin{align}
		V_n^{(\gamma)}(x)
		:={}&
		n^{
			\frac{\gamma}{2}
			\left(1-\frac{\beta}{\alpha}\right)-1
		}
		\sum_{i=1}^n
		\left|
		u_\nu(t_i,x)-u_\nu(t_{i-1},x)
		\right|^\gamma,
		\label{eq:temporal-gamma-statistic-nu}
	\end{align}
	and
	\begin{equation}
		\label{eq:temporal-sigma-riemann-sum}
		S_n^{(\gamma)}(x)
		:=
		\delta
		\sum_{i=1}^n
		\left|
		\sigma(u_\nu(t_{i-1},x))
		\right|^\gamma.
	\end{equation}
	Assume that
	\begin{equation}
		\label{eq:temporal-nondegeneracy}
		\mathbb P\left(
		\int_{T_1}^{T_2}
		|\sigma(u_\nu(r,x))|^\gamma
		\,\mathrm dr
		>0
		\right)
		=1.
	\end{equation}
	On the event \(\{V_n^{(\gamma)}(x)>0\}\), define
	\begin{equation}
		\label{eq:nu-estimator-temporal}
		\widehat\nu_{n,\gamma}(x)
		:=
		\left[
		\frac{
			\mu_\gamma
			C_{\alpha,\beta}^{\gamma/2}
			(T_2-T_1)^{
				\frac{\gamma}{2}
				\left(1-\frac{\beta}{\alpha}\right)-1
			}
			S_n^{(\gamma)}(x)
		}{
			V_n^{(\gamma)}(x)
		}
		\right]^{2\alpha/\gamma},
	\end{equation}
	and assign it any fixed positive value on the complementary event.
	Then
	\[
	\widehat\nu_{n,\gamma}(x)
	\xrightarrow{\mathbb P}
	\nu,
	\qquad n\to\infty.
	\]
\end{proposition}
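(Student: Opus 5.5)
The estimator is a continuous function of the ratio \(S_n^{(\gamma)}(x)/V_n^{(\gamma)}(x)\). The plan is to show that the numerator and denominator each converge in probability to explicit positive limits whose ratio is exactly \(\nu^{\gamma/(2\alpha)}\), and then to apply the continuous mapping theorem. Write
\[
A_x:=\int_{T_1}^{T_2}|\sigma(u_\nu(r,x))|^\gamma\,\mathrm dr,
\qquad
K:=\mu_\gamma C_{\alpha,\beta}^{\gamma/2}(T_2-T_1)^{\frac{\gamma}{2}\left(1-\frac{\beta}{\alpha}\right)-1}.
\]

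\textbf{Step 1: the denominator.} By \eqref{eq:temporal-gamma-variation-general-nu}, that is, Theorem \ref{thm:normalized-temporal-gamma-variation} with \(C_{\alpha,\beta}\) replaced by \(\nu^{-1/\alpha}C_{\alpha,\beta}\), we have \(V_n^{(\gamma)}(x)\to K\nu^{-\gamma/(2\alpha)}A_x\) in \(L^1(\Omega)\), hence in probability.

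\textbf{Step 2: the numerator.} I would show \(S_n^{(\gamma)}(x)\to A_x\) in \(L^1(\Omega)\), repeating the estimate of \(J_3\) in that proof with frozen point \(t_{i-1}\) in place of \(t_{i-1}(\delta)\). Write
\[
S_n^{(\gamma)}(x)-A_x=\sum_{i=1}^n\int_{t_{i-1}}^{t_i}\Bigl(|\sigma(u_\nu(t_{i-1},x))|^\gamma-|\sigma(u_\nu(r,x))|^\gamma\Bigr)\,\mathrm dr .
\]
Each summand is controlled using three facts:
\begin{itemize}
\item the inequality \(||a|^\gamma-|b|^\gamma|\le C_\gamma|a-b|(|a|^{\gamma-1}+|b|^{\gamma-1})\);
\item the Lipschitz continuity of \(\sigma\) together with the moment bound \eqref{eq:uniform-moment-bound}, applied through Cauchy--Schwarz;
\item the temporal increment estimate \eqref{eq:increment}, valid for general \(\nu\) by the same argument.
\end{itemize}
These give a bound \(C\delta^{\frac12(1-\beta/\alpha)}\) for each \(r\in[t_{i-1},t_i]\). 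Summing the integrals over \([T_1,T_2]\) then yields \(\mathbb E|S_n^{(\gamma)}(x)-A_x|\le C\delta^{\frac12(1-\beta/\alpha)}\to0\).

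\textbf{Step 3: continuous mapping.} By \eqref{eq:temporal-nondegeneracy}, \(A_x>0\) almost surely, so the limit of \(V_n^{(\gamma)}(x)\) is almost surely strictly positive. Consequently \(\mathbb P(V_n^{(\gamma)}(x)>0)\to1\): for any \(c>0\), \(\mathbb P(V_n\le 0)\le \mathbb P(\text{limit}\le c)+\mathbb P(|V_n-\text{limit}|\ge c)\), and we let \(n\to\infty\) and then \(c\downarrow0\). This means the arbitrary value assigned on the complementary event does not affect convergence in probability.

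The pair \((S_n^{(\gamma)}(x),V_n^{(\gamma)}(x))\) converges jointly in probability to \((A_x,K\nu^{-\gamma/(2\alpha)}A_x)\), since each component converges in probability on the same space. The map \((s,v)\mapsto (Ks/v)^{2\alpha/\gamma}\) is continuous on \(\{v>0\}\), which carries the limit with probability one. The continuous mapping theorem for convergence in probability therefore gives
\[
\widehat\nu_{n,\gamma}(x)\xrightarrow{\mathbb P}\Bigl(\frac{KA_x}{K\nu^{-\gamma/(2\alpha)}A_x}\Bigr)^{2\alpha/\gamma}=\nu .
\]
There is a minor point about the sign of \(s\): \(S_n\ge0\) and \(A_x>0\), so the map is well defined near the limit.

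The main obstacle is only the bookkeeping in Step 2. The rest is formal once \eqref{eq:temporal-gamma-variation-general-nu} is accepted, and the limit being random rather than constant causes no difficulty, because convergence in probability to a random limit is preserved by continuous maps.
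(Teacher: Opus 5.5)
Your proof is correct and follows the same route as the paper: convergence in probability of $V_n^{(\gamma)}(x)$ from \eqref{eq:temporal-gamma-variation-general-nu}, almost sure positivity of its limit giving $\mathbb P(V_n^{(\gamma)}(x)>0)\to1$, and the continuous mapping theorem applied to the ratio. The only difference is the numerator: you prove $S_n^{(\gamma)}(x)\to\int_{T_1}^{T_2}|\sigma(u_\nu(r,x))|^\gamma\,\mathrm dr$ in $L^1(\Omega)$ with rate $\delta^{\frac12(1-\beta/\alpha)}$ by a $J_3$-type moment estimate, whereas the paper gets almost sure convergence of the Riemann sums from the sample-path continuity of $u_\nu(\cdot,x)$; either suffices.
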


\begin{proof}
	By the continuity of \(u_\nu(\cdot,x)\) and the Lipschitz
	continuity of \(\sigma\), the Riemann sums satisfy
	\begin{equation}
		\label{eq:temporal-sigma-riemann-convergence}
		S_n^{(\gamma)}(x)
		\longrightarrow
		\int_{T_1}^{T_2}
		|\sigma(u_\nu(r,x))|^\gamma
		\,\mathrm dr
	\end{equation}
	almost surely.
	
	On the other hand, \eqref{eq:temporal-gamma-variation-general-nu}
	gives
	\begin{align*}
		V_n^{(\gamma)}(x)
		\xrightarrow{\mathbb P}{}
		\mu_\gamma C_{\alpha,\beta}^{\gamma/2}
		\nu^{-\gamma/(2\alpha)}
		(T_2-T_1)^{
			\frac{\gamma}{2}
			\left(1-\frac{\beta}{\alpha}\right)-1
		}
		\int_{T_1}^{T_2}
		|\sigma(u_\nu(r,x))|^\gamma
		\,\mathrm dr.
	\end{align*}
	Under \eqref{eq:temporal-nondegeneracy}, the limiting random
	variable is strictly positive almost surely. Hence,
	\[
	\mathbb P\left(
	V_n^{(\gamma)}(x)>0
	\right)
	\longrightarrow1.
	\]
	Combining the preceding two convergences and applying the continuous
	mapping theorem, we obtain
	\begin{align*}
		\frac{
			\mu_\gamma
			C_{\alpha,\beta}^{\gamma/2}
			(T_2-T_1)^{
				\frac{\gamma}{2}
				\left(1-\frac{\beta}{\alpha}\right)-1
			}
			S_n^{(\gamma)}(x)
		}{
			V_n^{(\gamma)}(x)
		}\xrightarrow{\mathbb P}
		\nu^{\gamma/(2\alpha)}.
	\end{align*}
	Raising both sides to the power \(2\alpha/\gamma\) proves
	\[
	\widehat\nu_{n,\gamma}(x)
	\xrightarrow{\mathbb P}
	\nu.
	\]
\end{proof}

\begin{remark}
	\label{rem:nu-estimator-temporal-special-cases}
	Two important special cases of Proposition
	\ref{prop:nu-estimator-temporal} are worth recording.
	
	When \(\gamma=2\), we have \(\mu_2=1\), and
	\begin{equation}
		\label{eq:nu-estimator-quadratic}
		\widehat\nu_{n,2}(x)
		=
		\left[
		\frac{
			C_{\alpha,\beta}
			(T_2-T_1)^{-\beta/\alpha}
			\delta
			\displaystyle\sum_{i=1}^n
			\sigma^2(u_\nu(t_{i-1},x))
		}{
			n^{-\beta/\alpha}
			\displaystyle\sum_{i=1}^n
			\left[
			u_\nu(t_i,x)-u_\nu(t_{i-1},x)
			\right]^2
		}
		\right]^\alpha.
	\end{equation}
	
	For the critical exponent
	\[
	p_{\alpha,\beta}
	:=
	\frac{2\alpha}{\alpha-\beta},
	\]
	the normalization involving \(n\) and the power of \(T_2-T_1\)
	both disappear. Therefore,
	\begin{equation}
		\label{eq:nu-estimator-critical}
		\widehat\nu_{n,p_{\alpha,\beta}}(x)
		=
		\left[
		\frac{
			\mu_{p_{\alpha,\beta}}
			C_{\alpha,\beta}^{p_{\alpha,\beta}/2}
			\delta
			\displaystyle\sum_{i=1}^n
			\left|
			\sigma(u_\nu(t_{i-1},x))
			\right|^{p_{\alpha,\beta}}
		}{
			\displaystyle\sum_{i=1}^n
			\left|
			u_\nu(t_i,x)-u_\nu(t_{i-1},x)
			\right|^{p_{\alpha,\beta}}
		}
		\right]^{\alpha-\beta}.
	\end{equation}
	Both estimators converge to \(\nu\) in probability.
	Here
	\[
	\mu_{p_{\alpha,\beta}}
	=
	\mathbb E|Z|^{p_{\alpha,\beta}},
	\qquad Z\sim N(0,1).
	\]
\end{remark}

We next use temporal observations collected over an increasing number
of spatial locations.

\begin{proposition}
	\label{prop:nu-estimator-space-averaged}
	Let \(V_{n,m}^{(\gamma)}(u_\nu)\) be defined as in
	\eqref{eq:space-averaged-gamma-variation}, with \(u\) replaced by
	\(u_\nu\), and define
	\begin{equation}
		\label{eq:space-time-sigma-riemann-sum}
		S_{n,m}^{(\gamma)}
		:=
		\frac{\delta}{m}
		\sum_{j=0}^{m-1}
		\sum_{i=1}^n
		\left|
		\sigma(u_\nu(t_{i-1},x_j))
		\right|^\gamma,
		\qquad
		x_j:=\frac jm.
	\end{equation}
	Assume that
	\begin{equation}
		\label{eq:space-time-nondegeneracy}
		\mathbb P\left(
		\int_0^1
		\int_{T_1}^{T_2}
		|\sigma(u_\nu(t,x))|^\gamma
		\,\mathrm dt\,\mathrm dx
		>0
		\right)
		=1.
	\end{equation}
	On the event
	\(\{V_{n,m}^{(\gamma)}(u_\nu)>0\}\), define
	\begin{equation}
		\label{eq:nu-estimator-space-averaged}
		\widehat\nu_{n,m,\gamma}
		:=
		\left[
		\frac{
			\mu_\gamma
			C_{\alpha,\beta}^{\gamma/2}
			(T_2-T_1)^{
				\frac{\gamma}{2}
				\left(1-\frac{\beta}{\alpha}\right)-1
			}
			S_{n,m}^{(\gamma)}
		}{
			V_{n,m}^{(\gamma)}(u_\nu)
		}
		\right]^{2\alpha/\gamma},
	\end{equation}
	and assign it any fixed positive value otherwise. Then, as
	\(n,m\to\infty\),
	\[
	\widehat\nu_{n,m,\gamma}
	\xrightarrow{\mathbb P}
	\nu.
	\]
	No relation between the rates at which \(n\) and \(m\) tend to
	infinity is required.
\end{proposition}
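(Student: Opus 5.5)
The argument mirrors the proof of Proposition~\ref{prop:nu-estimator-temporal}, with the single-point results replaced by their spatially averaged counterparts. Write
\[
A_\gamma:=\mu_\gamma C_{\alpha,\beta}^{\gamma/2}(T_2-T_1)^{\frac{\gamma}{2}(1-\frac{\beta}{\alpha})-1},
\qquad
\Lambda:=\int_0^1\int_{T_1}^{T_2}|\sigma(u_\nu(t,x))|^\gamma\,\mathrm dt\,\mathrm dx .
\]
By \eqref{eq:space-averaged-gamma-variation-general-nu}, which is Proposition~\ref{pro:space-averaged-temporal-gamma-variation} rescaled to general \(\nu\), we have \(V_{n,m}^{(\gamma)}(u_\nu)\to A_\gamma\nu^{-\gamma/(2\alpha)}\Lambda\) in \(L^1(\Omega)\), hence in probability, as \(n,m\to\infty\) jointly. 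No rate relation is needed here, because the bound \eqref{eq:space-averaged-gamma-rate} is of the form \(C(n^{-\eta}+m^{-\zeta})\).

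The next step is to show that \(S_{n,m}^{(\gamma)}\to\Lambda\) in \(L^1(\Omega)\), again jointly in \(n,m\). I would not use an almost-sure Riemann-sum argument, since the grid moves in two directions at once. Instead I would write
\[
S_{n,m}^{(\gamma)}=\sum_{j=0}^{m-1}\sum_{i=1}^n\int_{x_j}^{x_{j+1}}\int_{t_{i-1}}^{t_i}|\sigma(u_\nu(t_{i-1},x_j))|^\gamma\,\mathrm dt\,\mathrm dx
\]
and then bound \(\mathbb E|S_{n,m}^{(\gamma)}-\Lambda|\) by the integral of \(\mathbb E\bigl||\sigma(u_\nu(t_{i-1},x_j))|^\gamma-|\sigma(u_\nu(t,x))|^\gamma\bigr|\) over each cell. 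On each cell I would apply the inequality \(||a|^\gamma-|b|^\gamma|\le C_\gamma|a-b|(|a|^{\gamma-1}+|b|^{\gamma-1})\), Lipschitz continuity of \(\sigma\), Cauchy--Schwarz together with the moment bound \eqref{eq:uniform-moment-bound}, and the increment estimate \eqref{eq:increment} with \(p=2\). This gives the pointwise bound
\[
C\bigl(\delta^{\frac12(1-\beta/\alpha)}+m^{-q/2}\bigr),\qquad 0<q<\kappa_*,
\]
uniformly in the cell. Summing over cells yields
\[
\mathbb E|S_{n,m}^{(\gamma)}-\Lambda|\le C\bigl(n^{-\frac12(1-\beta/\alpha)}+m^{-q/2}\bigr)\to0 .
\]
This is the step that needs care: the constants must be uniform in \(x\in[0,1]\) and \(t\in[T_1,T_2]\), which \eqref{eq:uniform-moment-bound} and \eqref{eq:increment} supply. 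The same estimate is essentially the one used for \(D_2\) in the proof of Proposition~\ref{pro:space-averaged-temporal-gamma-variation}.

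To conclude, note that the nondegeneracy assumption \eqref{eq:space-time-nondegeneracy} gives \(\Lambda>0\) almost surely. Hence the limit of \(V_{n,m}^{(\gamma)}\) is a.s.\ positive, and \(\mathbb P(V_{n,m}^{(\gamma)}(u_\nu)>0)\to1\), so the arbitrary value assigned on the complementary event does not matter asymptotically. Both \(S_{n,m}^{(\gamma)}\) and \(V_{n,m}^{(\gamma)}\) converge in probability, jointly, to functionals of the same \(\Lambda\). The continuous mapping theorem, applied to \((s,v)\mapsto A_\gamma s/v\) at points with \(v>0\), then gives \(A_\gamma S_{n,m}^{(\gamma)}/V_{n,m}^{(\gamma)}\to\nu^{\gamma/(2\alpha)}\) in probability. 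This uses joint convergence of the pair, which holds because both components converge in probability on the same space. Raising to the power \(2\alpha/\gamma\) yields \(\widehat\nu_{n,m,\gamma}\to\nu\) in probability.

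For the double-index limit, I would phrase every convergence along an arbitrary sequence \((n_k,m_k)\) with both \(n_k,m_k\to\infty\). All the bounds above are explicit in \(n\) and \(m\) separately, so they hold along any such sequence, which gives the asserted independence from the relative rates of \(n\) and \(m\).
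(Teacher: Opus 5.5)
Your proof is correct and has the same skeleton as the paper's. Both arguments get convergence of $V_{n,m}^{(\gamma)}(u_\nu)$ from the spatially averaged variation result with general $\nu$, show $S_{n,m}^{(\gamma)}\to\Lambda$, use the nondegeneracy assumption for a.s.\ positivity of the limit, and conclude with the continuous mapping theorem.

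The only real difference is the step $S_{n,m}^{(\gamma)}\to\Lambda$. The paper obtains almost-sure convergence of the two-dimensional Riemann sums from the joint continuity of $u_\nu$. You instead prove $L^1(\Omega)$ convergence from \eqref{eq:uniform-moment-bound} and \eqref{eq:increment}, essentially repeating the $D_2$ estimate.

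Your stated reason for avoiding the pathwise argument is not valid. Fix any $\omega$ at which $u_\nu(\cdot,\cdot,\omega)$ is continuous. Then $|\sigma(u_\nu)|^\gamma$ is uniformly continuous on the compact rectangle $[T_1,T_2]\times[0,1]$. Hence $|S_{n,m}^{(\gamma)}-\Lambda|$ is at most $(T_2-T_1)$ times its modulus of continuity evaluated at $\delta+1/m$. That bound tends to zero along any joint limit $n,m\to\infty$, so the grid moving in two directions causes no difficulty.

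What your route buys is an explicit rate $C\bigl(n^{-\frac12(1-\beta/\alpha)}+m^{-q/2}\bigr)$ for the $\sigma$-Riemann sum, matching the structure of \eqref{eq:space-averaged-gamma-rate}. It also needs only second-moment increment bounds rather than sample-path continuity. The paper's route is shorter and gives the stronger almost-sure mode of convergence for that term. Either version suffices for consistency in probability.
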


\begin{proof}
	The joint continuity of \(u_\nu\) and the Lipschitz continuity of
	\(\sigma\) imply that
	\begin{align}
		S_{n,m}^{(\gamma)}
		\longrightarrow
		\int_0^1
		\int_{T_1}^{T_2}
		|\sigma(u_\nu(t,x))|^\gamma
		\,\mathrm dt\,\mathrm dx
		\label{eq:space-time-sigma-riemann-convergence}
	\end{align}
	almost surely as \(n,m\to\infty\).
	
	Moreover, by
	\eqref{eq:space-averaged-gamma-variation-general-nu},
	\begin{align*}
		V_{n,m}^{(\gamma)}(u_\nu)
		\xrightarrow{\mathbb P}{}
		\mu_\gamma C_{\alpha,\beta}^{\gamma/2}
		\nu^{-\gamma/(2\alpha)}
		(T_2-T_1)^{
			\frac{\gamma}{2}
			\left(1-\frac{\beta}{\alpha}\right)-1
		}
		\int_0^1
		\int_{T_1}^{T_2}
		|\sigma(u_\nu(t,x))|^\gamma
		\,\mathrm dt\,\mathrm dx.
	\end{align*}
	The limiting random variable is strictly positive almost surely by
	\eqref{eq:space-time-nondegeneracy}. The continuous mapping theorem
	therefore gives
	\[
	\widehat\nu_{n,m,\gamma}
	\xrightarrow{\mathbb P}
	\nu.
	\]
\end{proof}

\begin{remark}
	\label{rem:nu-estimator-space-averaged-special-cases}
	For \(\gamma=2\), Proposition
	\ref{prop:nu-estimator-space-averaged} gives
	\begin{equation}
		\widehat\nu_{n,m,2}
		=
		\left[
		\frac{
			C_{\alpha,\beta}
			(T_2-T_1)^{-\beta/\alpha}
			S_{n,m}^{(2)}
		}{
			V_{n,m}^{(2)}(u_\nu)
		}
		\right]^\alpha
		\xrightarrow{\mathbb P}
		\nu.
	\end{equation}
	
	For
	\[
	p_{\alpha,\beta}
	=
	\frac{2\alpha}{\alpha-\beta},
	\]
	the corresponding critical-variation estimator is
	\begin{equation}
		\widehat\nu_{n,m,p_{\alpha,\beta}}
		=
		\left[
		\frac{
			\mu_{p_{\alpha,\beta}}
			C_{\alpha,\beta}^{p_{\alpha,\beta}/2}
			S_{n,m}^{(p_{\alpha,\beta})}
		}{
			V_{n,m}^{(p_{\alpha,\beta})}(u_\nu)
		}
		\right]^{\alpha-\beta}
		\xrightarrow{\mathbb P}
		\nu.
	\end{equation}
\end{remark}

\subsection{Estimation of the ratio
\texorpdfstring{\(\beta/\alpha\)}{beta/alpha}}

The temporal roughness depends on \(\alpha\) and \(\beta\) through the
ratio \(\beta/\alpha\). We therefore first construct a consistent
estimator for this ratio.

\begin{proposition}
	\label{prop:beta-alpha-ratio-estimator}
	Let \(x\in\mathbb R\) and \(\gamma\geq2\) be fixed, and assume that
	\begin{equation}
		\label{eq:ratio-nondegeneracy}
		\mathbb P\left(
		\int_{T_1}^{T_2}
		|\sigma(u_\nu(r,x))|^\gamma
		\,\mathrm dr
		>0
		\right)
		=1.
	\end{equation}
	For every \(n\geq2\), whenever
	\[
	\frac1n
	\sum_{i=1}^n
	\left|
	\sigma\bigl(u_\nu(t_{i-1},x)\bigr)
	\right|^\gamma
	>0,
	\]
	define
	\begin{equation}
		\label{eq:definition-A-n-gamma}
		A_{n,\gamma}(x)
		:=
		\frac{
			\sum_{i=1}^n
			\left|
			u_\nu(t_i,x)-u_\nu(t_{i-1},x)
			\right|^\gamma
		}{
			\frac1n
			\sum_{i=1}^n
			\left|
			\sigma\bigl(u_\nu(t_{i-1},x)\bigr)
			\right|^\gamma
		}.
	\end{equation}
	If the denominator in
	\eqref{eq:definition-A-n-gamma} vanishes, we set
	\(A_{n,\gamma}(x)=0\).
	
	On the event
	\(\{A_{n,\gamma}(x)>0\}\), define
	\begin{equation}
		\label{eq:beta-alpha-ratio-estimator}
		\widehat r_{n,\gamma}
		:=
		1-\frac{2}{\gamma}
		+
		\frac{
			2\log A_{n,\gamma}(x)
		}{
			\gamma\log n
		},
	\end{equation}
	and assign it an arbitrary fixed value otherwise. Then
	\[
	\widehat r_{n,\gamma}
	\xrightarrow{\mathbb P}
	\frac{\beta}{\alpha},
	\qquad n\to\infty.
	\]
\end{proposition}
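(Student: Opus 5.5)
The idea is to rewrite \(A_{n,\gamma}(x)\) as a power of \(n\) times a ratio that converges in probability to a strictly positive finite random variable; the logarithm then isolates the exponent. Write \(\theta:=\frac{\gamma}{2}(1-\frac{\beta}{\alpha})-1\) and let
\[
S_n:=\frac1n\sum_{i=1}^n\bigl|\sigma(u_\nu(t_{i-1},x))\bigr|^\gamma .
\]
Then, on the event where \(S_n>0\),
\[
A_{n,\gamma}(x)=n^{-\theta}\,\frac{V_n^{(\gamma)}(x)}{S_n},
\]
where \(V_n^{(\gamma)}(x)\) is the normalized statistic from \eqref{eq:temporal-gamma-statistic-nu}. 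Hence
\[
\frac{2\log A_{n,\gamma}(x)}{\gamma\log n}
=-\frac{2\theta}{\gamma}+\frac{2}{\gamma\log n}\log\frac{V_n^{(\gamma)}(x)}{S_n}.
\]
Since \(-\frac{2\theta}{\gamma}=\frac{2}{\gamma}-1+\frac{\beta}{\alpha}\), this gives
\[
\widehat r_{n,\gamma}=\frac{\beta}{\alpha}+\frac{2}{\gamma\log n}\log\frac{V_n^{(\gamma)}(x)}{S_n}
\]
on the event \(\{S_n>0,\ V_n^{(\gamma)}(x)>0\}\). This event coincides with \(\{A_{n,\gamma}(x)>0\}\).

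It then suffices to prove two things:
\[
\mathbb P\bigl(S_n>0,\ V_n^{(\gamma)}(x)>0\bigr)\to1,
\qquad
\log\frac{V_n^{(\gamma)}(x)}{S_n}=O_{\mathbb P}(1).
\]
Dividing a tight sequence by \(\log n\to\infty\) gives convergence to zero in probability, and that yields the claim.

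For the tightness, I use \eqref{eq:temporal-gamma-variation-general-nu}. It gives \(V_n^{(\gamma)}(x)\to c_\nu (T_2-T_1)^{\theta}I\) in \(L^1\), hence in probability. Here \(I:=\int_{T_1}^{T_2}|\sigma(u_\nu(r,x))|^\gamma\,\mathrm dr\), and \(c_\nu=\mu_\gamma C_{\alpha,\beta}^{\gamma/2}\nu^{-\gamma/(2\alpha)}>0\).

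For the Riemann sums, I argue as in the proof of Proposition \ref{prop:nu-estimator-temporal}. The map \(r\mapsto|\sigma(u_\nu(r,x))|^\gamma\) is continuous on \([T_1,T_2]\) almost surely, by the Hölder modification and the continuity of \(\sigma\). Therefore \(\delta\sum_i|\sigma(u_\nu(t_{i-1},x))|^\gamma\to I\) almost surely. Since \(S_n=\delta\sum_i(\cdots)/(T_2-T_1)\), this gives \(S_n\to I/(T_2-T_1)\) almost surely.

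By \eqref{eq:ratio-nondegeneracy}, \(I\in(0,\infty)\) almost surely. The continuous mapping theorem, applied on \((0,\infty)^2\), then gives
\[
\frac{V_n^{(\gamma)}(x)}{S_n}\xrightarrow{\mathbb P}c_\nu (T_2-T_1)^{\theta+1},
\]
which is a deterministic positive constant, because the random factor \(I\) cancels. Hence \(\log(V_n^{(\gamma)}(x)/S_n)\) converges in probability to a finite constant, and in particular it is tight. Positivity of both limits also gives \(\mathbb P(S_n>0,\ V_n^{(\gamma)}(x)>0)\to1\).

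To finish, on the complementary event, whose probability tends to \(0\), the arbitrary value assigned to \(\widehat r_{n,\gamma}\) is irrelevant for convergence in probability.

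The only delicate point is handling the random denominator and the event where things vanish. This is settled by the nondegeneracy assumption together with the cancellation of \(I\) in the ratio. No convergence rate is needed; we only need the ratio to converge, not to be bounded uniformly in \(n\).
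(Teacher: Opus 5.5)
Your proposal is correct and follows essentially the same route as the paper. Both arguments write $n^{\theta}A_{n,\gamma}(x)=V_n^{(\gamma)}(x)/S_n$ and combine the $L^1$ variation limit with almost-sure Riemann-sum convergence. The random factor $\int_{T_1}^{T_2}|\sigma(u_\nu(r,x))|^\gamma\,\mathrm dr$ then cancels, so the ratio converges in probability to a deterministic positive constant, and its logarithm divided by $\log n$ vanishes. Your identification of $\{A_{n,\gamma}(x)>0\}$ with $\{S_n>0,\ V_n^{(\gamma)}(x)>0\}$, and the check that this event has probability tending to one, are spelled out in slightly more detail than in the paper, but the argument is the same.
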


\begin{proof}
	By the continuity of \(u_\nu(\cdot,x)\) and the Lipschitz
	continuity of \(\sigma\), the Riemann sums satisfy
	\begin{equation}
		\label{eq:ratio-Riemann-sum}
		\frac1n
		\sum_{i=1}^n
		\left|
		\sigma\bigl(u_\nu(t_{i-1},x)\bigr)
		\right|^\gamma
		\longrightarrow
		\frac1{T_2-T_1}
		\int_{T_1}^{T_2}
		|\sigma(u_\nu(r,x))|^\gamma
		\,\mathrm dr
	\end{equation}
	almost surely.
	
	By \eqref{eq:ratio-nondegeneracy}, the limit on the right-hand
	side of \eqref{eq:ratio-Riemann-sum} is strictly positive almost
	surely. Consequently, the denominator in
	\eqref{eq:definition-A-n-gamma} is positive for all sufficiently
	large \(n\), almost surely. Hence, the convention
	\(A_{n,\gamma}(x)=0\) is asymptotically irrelevant.
	
	Combining \eqref{eq:ratio-Riemann-sum} with
	\eqref{eq:temporal-gamma-variation-general-nu}, we obtain
	\begin{equation}
		\label{eq:A-n-gamma-limit}
		n^{
			\frac{\gamma}{2}
			\left(1-\frac{\beta}{\alpha}\right)-1
		}
		A_{n,\gamma}(x)
		\xrightarrow{\mathbb P}
		\mu_\gamma
		C_{\alpha,\beta}^{\gamma/2}
		\nu^{-\gamma/(2\alpha)}
		(T_2-T_1)^{
			\frac{\gamma}{2}
			\left(1-\frac{\beta}{\alpha}\right)
		}.
	\end{equation}
	The limit in \eqref{eq:A-n-gamma-limit} is a strictly positive
	deterministic constant. In particular,
	\[
	\mathbb P\bigl(A_{n,\gamma}(x)>0\bigr)
	\longrightarrow1.
	\]
	
	On the event \(\{A_{n,\gamma}(x)>0\}\), we have
	\begin{align*}
		\frac{\log A_{n,\gamma}(x)}{\log n}
		=
		1-
		\frac{\gamma}{2}
		\left(1-\frac{\beta}{\alpha}\right)
		+
		\frac{
			\log\left[
			n^{
				\frac{\gamma}{2}
				\left(1-\frac{\beta}{\alpha}\right)-1
			}
			A_{n,\gamma}(x)
			\right]
		}{
			\log n
		}.
	\end{align*}
	By \eqref{eq:A-n-gamma-limit} and the continuous mapping theorem,
	the numerator in the second term is bounded in probability.
	Since \(\log n\to\infty\), it follows that
	\[
	\frac{\log A_{n,\gamma}(x)}{\log n}
	\xrightarrow{\mathbb P}
	1-
	\frac{\gamma}{2}
	\left(1-\frac{\beta}{\alpha}\right).
	\]
	Therefore,
	\begin{align*}
		\widehat r_{n,\gamma}
		\xrightarrow{\mathbb P}
		1-\frac2\gamma
		+
		\frac2\gamma
		\left[
		1-
		\frac{\gamma}{2}
		\left(1-\frac{\beta}{\alpha}\right)
		\right]
		=
		\frac{\beta}{\alpha}.
	\end{align*}
\end{proof}

\begin{remark}
	\label{rem:individual-alpha-beta-estimators}
	The quadratic case \(\gamma=2\) is particularly convenient, since
	\begin{equation}
		\label{eq:beta-alpha-ratio-estimator-quadratic}
		\widehat r_{n,2}
		=
		\frac{\log A_{n,2}(x)}{\log n}
		\xrightarrow{\mathbb P}
		\frac{\beta}{\alpha}.
	\end{equation}
	This estimator does not involve \(\nu\) or
	\(C_{\alpha,\beta}\).
	
	If \(\alpha\) is known, then
	\[
	\widehat\beta_n
	:=
	\alpha\widehat r_{n,2}
	\]
	is a consistent estimator of \(\beta\). If \(\beta\) is known, then,
	on the event \(\{\widehat r_{n,2}>0\}\), the estimator
	\[
	\widehat\alpha_n
	:=
	\frac{\beta}{\widehat r_{n,2}}
	\]
	satisfies
	\[
	\widehat\alpha_n
	\xrightarrow{\mathbb P}
	\alpha.
	\]
	An arbitrary value may be assigned on
	\(\{\widehat r_{n,2}\leq0\}\), whose probability tends to zero.
	
	If both \(\alpha\) and \(\beta\) are unknown, temporal
high-frequency observations identify only their ratio and do not
separate the two parameters. If, in addition, a consistent estimator
\(\widetilde\alpha_n\) of \(\alpha\) is available, for instance one
constructed from spatial high-frequency observations, then
\[
\widetilde\beta_n
:=
\widetilde\alpha_n\widehat r_{n,2}
\]
satisfies, by Slutsky's theorem,
\[
\widetilde\beta_n
\xrightarrow{\mathbb P}
\beta.
\]
\end{remark}

\end{document}